\newcommand{\NN} {\mathbb N}

\newcommand{\RR} {\mathbb R}
\newcommand{\CC} {\mathbb C}

\def \epsilon{\varepsilon}

\def \D  {{\mathcal D}}

\def \I  {{\mathcal I}}
\def \J {{\mathcal J}}
\def \LL {{\mathcal L}}

\def \S  {{\mathcal S}}

\def \d {\text{d}}
\renewcommand{\SS}  {{\mathcal S}}

\def\bfell{{\boldsymbol{\ell}}}
\def\bfF{{\boldsymbol{F}}}
\def\bfm{{\boldsymbol{m}}}
\def\bfn{{\boldsymbol{n}}}
\def\bfs{{\boldsymbol{s}}}

\def \bfw {{\boldsymbol{w}}}

\def \half {{\textstyle{ 1\over 2}}}

\def \Ga {\Gamma}
\def \si {\sigma}

\renewcommand{\S}{{\mathcal S}}

\documentclass[12pt,reqno]{amsart}

\usepackage{amsmath,amssymb}
\newtheorem{theorem}{Theorem}
\newtheorem{lem}{Lemma}

\newtheorem{cor}{Collorary}
\usepackage{url} 
\usepackage{xcolor}

\numberwithin{equation}{section}

\begin{document}
\title[]{Multiple standard twists of $L$-functions}
\author[]{J.Kaczorowski \lowercase{and} A.Perelli}
\maketitle

\medskip
%%%%%
{\bf Abstract.} {\small The standard twist of $L$-functions plays a fundamental role in the Selberg class theory. It is defined as an absolutely convergent Dirichlet series and admits meromorphic continuation beyond the half-plane of absolute convergence. Nowadays, the analytic properties of the standard twist $F(s,\alpha)$ of an $L$-function $F$ are well-understood. For example, it has poles when the positive number $\alpha$ belongs to the so-called spectrum of $F$, and is entire otherwise. In this paper, for a given set $\bfF=\{F_1,\dots,F_N\}$ of $L$-functions and $\bfs\in\CC^N$, we consider the multiple standard twist $\bfF(\bfs,\alpha)$. This is defined initially on a certain half-space of $\CC^N$, and we describe its meromorphic continuation to the whole space. Results in the multidimensional case are, in many ways, analogous to those in the one-dimensional case. In particular, the spectrum of a multiple standard twist is relevant to the description of the set of poles of $\bfF(\bfs,\alpha)$. There are also significant differences; for instance, in the structure of the singularities.}

\medskip
%%%%
{\bf Mathematics Subject Classification (2010):} 11M41, 11M32

\medskip
%%%%%
{\bf Keywords:}  Selberg class, standard twist, multiple Dirichlet series

\smallskip
%%%%%
\section{Introduction}
%%%%%%%%%%%%%%%%%%%%%%%%%%%%%%%%%%%%%%%%%%%%%%%%%%%%%%%%%%%%%
\smallskip
%%%%%
Let 
\begin{equation}
\label{Dseries}
F(s)=\sum_{n=1}^\infty\frac{ a(n)}{n^s} \qquad (\si>1)
\end{equation}
be a function of degree $d_F>0$ from the extended Selberg class $\SS^\sharp$. For reader's convenience, we collect some basic definitions and notation in the next section. For $\alpha>0$, the standard twist of $F$ is defined for $\si>1$ by the absolutely convergent Dirichlet series
\[ 
F(s,\alpha)= \sum_{n=1}^\infty\frac{a(n)}{n^s}e(-\alpha n^{1/d_F}),
\]
where $e(x)=\exp(2\pi i x)$. We refer to \cite{Ka-Pe/2005} and \cite{Ka-Pe/2021a}  for an in-depth study of $F(s,\alpha)$. Here we recall only that there exists an infinite countable set ${\rm Spec }(F)\subset (0, \infty)$, called the spectrum of $F$, with the following properties. If $\alpha\not\in {\rm Spec }(F)$ then $F(s,\alpha)$ is an entire function, otherwise it is meromorphic on $\CC$ with at most simple poles at the points
\begin{equation}
\label{s*ell}
 s^*_\ell= \frac{d_F+1}{2d_F}-\frac{\ell}{d_F} - i\theta_F \qquad (\ell=0,1,2,\dots),
 \end{equation}
where $\theta_F$ denotes the internal shift of $F(s)$.

\smallskip
%%%%%
In this paper we develop a multidimensional version of the above theory. Let $\alpha>0$, $N\geq1$, $\bfF=\{F_1,\dots, F_N\in\S^\sharp\}$ with degrees $d_1,\dots,d_N>0$ and Dirichlet coefficients $a_1(n),\dots,a_N(n)$, and let
\[
\CC_1^N=\{ \bfs=(s_1,\dots,s_N)\in \CC^N, \Re(s_\nu)>1 \ \text{for} \ \nu=1,\dots,N\}.
\] 
Then for $\kappa_1,\dots,\kappa_N>0$ such that
\begin{equation}
\label{sumdkappa} 
\sum_{\nu=1}^N d_\nu\kappa_\nu = 1
\end{equation}
we consider the {\it multiple standard twist} $\bfF(\bfs,\alpha)$ of $F_1,\dots, F_N$, defined for $s\in\CC_1^N$ as
\[ 
\bfF(\bfs,\alpha) =\sum_{n_1=1}^\infty\ldots\sum_{n_N=1}^\infty
\frac{a_1(n_1)\cdots a_N(n_N)}{n_1^{s_1}\cdots n_N^{s_N}} e(-\alpha n_1^{\kappa_1}\cdots n_N^{\kappa_N}).
\]
The series is absolutely convergent, thus $\bfF(\bfs,\alpha)$ is holomorphic on the half-space $\CC_1^N$.  Clearly, when $N=1$ and $F_1=F$ we have that $\bfF( \bfs, \alpha)=F(s,\alpha)$. Note that for $N\geq 2$ and fixed $F_1,\dots,F_N$ there are infinitely many possibilities of choosing exponents $\kappa_1,\dots,\kappa_N>0$ satisfying \eqref{sumdkappa}. Hence, there are infinitely many multiple standard twists associated with the same set $\bfF$.  Formally, we should stress this fact in the notation, but for simplicity we stay with the above lighter notation.

\smallskip
%%%%%
We define the {\it spectrum} of a multiple standard twist as
\[
{\rm Spec}(\bfF) = \left\{ \prod_{\nu=1}^N \left(\frac{n_\nu}{q_\nu\kappa_\nu^{d_\nu}}\right)^{\kappa_\nu} : \prod_{\nu=1}^N a_\nu(n_\nu)\neq 0\right\},
 \]
 where $q_\nu=q_{F_\nu}$ denote the corresponding conductors. Moreover, let 
\begin{equation}
\label{new1}
d= \sum_{\nu=1}^N d_\nu \quad \text{and}\quad \theta =\sum_{\nu=1}^Nd_\nu\theta_{\nu},
\end{equation}
where $\theta_\nu=\theta_{F_\nu}$ denote the corresponding internal shifts; see \eqref{omegaxi}. Finally, for integers $\ell\geq 0$ we define the hyperplanes $H^*_\ell$ in $\CC^N$ as
\begin{equation}
\label{H}
H^*_\ell=\Big\{\bfs=(s_1,\ldots,s_N)\in \CC^N: \sum_{\nu=1}^Nd_\nu s_\nu=\frac{d+1}{2}-\ell -i\theta.  \Big\}.
\end{equation}
Again, when $N=1$ and $F_1=F$ we have that ${\rm Spec}(\bfF) ={\rm Spec}(F)$ and $H^*_\ell=s^*_\ell$. Note that, for $N\geq2$, Spec$(\bfF)$ depends on the choice of the parameters $\kappa_1,\dots,\kappa_N$ in \eqref{sumdkappa}, but the hyperplanes $H_\ell^*$ are independent from such parameters. With the above notation, the basic analytic properties of $\bfF( \bfs, \alpha)$ are described by the following theorem.

 %%%%%%%%%%%%%%%%%%%%
 \begin{theorem}
 \label{T1}
For $N\geq2$ we have 

\noindent
{\rm (i)} if $\alpha\not\in {\rm Spec}(\bfF)$, then $\bfF( \bfs, \alpha)$ has analytic continuation to an entire function on $\CC^N$;

\noindent
{\rm (ii)} if $\alpha\in {\rm Spec}(\bfF)$, then it has analytic continuation to a holomorphic function on 
\[
\CC^N \backslash \bigcup_{\ell=0}^\infty H_\ell^*,
\]
and for every $\ell\geq 0$ and $\bfs\in H^*_\ell$ the limit
\begin{equation}\label{limitXiell}
\Xi_\ell(\bfs,\alpha)=\lim_{\substack{\bfw\to\bfs\\ \bfw\not\in H^*_\ell}}\Big(\sum_{\nu=1}^Nd_\nu (w_\nu-s_\nu)\Big) \bfF(\bfw,\alpha)
\end{equation}
exists and is holomorphic on $H^*_\ell$.
\end{theorem}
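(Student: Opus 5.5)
The strategy is to reduce the multiple twist to the one-dimensional standard twist of a single auxiliary $L$-function, namely a "product" of rescaled copies of the $F_\nu$. Put $\bfs$ in the form $s_\nu = \kappa_\nu d_\nu\, z + u_\nu$. Here $z\in\CC$ is a common variable, and $\bfu=(u_1,\dots,u_N)$ ranges over the hyperplane $\sum_\nu d_\nu u_\nu=0$. Because of \eqref{sumdkappa}, the linear form $\sum_\nu d_\nu s_\nu$ then equals $z$. With this substitution,
\[
\bfF(\bfs,\alpha)=\sum_{m}\frac{b_{\bfu}(m)}{m^{z}}\,e(-\alpha m^{1/1}),\qquad m=n_1^{\kappa_1}\cdots n_N^{\kappa_N},
\]
a generalized Dirichlet series in the real frequencies $\lambda=\prod n_\nu^{\kappa_\nu}$. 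Its coefficients are $\prod_\nu a_\nu(n_\nu)n_\nu^{-u_\nu}$, summed over representations of $\lambda$.

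So I consider $G(z)=G_{\bfu}(z)=\prod_\nu F_\nu(\kappa_\nu d_\nu z+u_\nu)$, where each $F_\nu(\kappa_\nu d_\nu z+u_\nu)$ is viewed as a function of $z$. Its functional equation is the product of the $F_\nu$ functional equations. After the change of variable, the gamma factors become $\Gamma(\lambda_j\kappa_\nu d_\nu z+\dots)$, so the total degree is $2\sum_\nu\kappa_\nu d_\nu\sum_j\lambda_j=\sum_\nu \kappa_\nu d_\nu\cdot d_\nu$. This is not quite right, so I will normalize in the same way as the definition of Spec$(\bfF)$: the exponent $\kappa_\nu$ and the factor $q_\nu\kappa_\nu^{d_\nu}$ indicate a rescaling under which $G$ behaves like an object of degree $1$ in $z$ with frequency $\lambda$. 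I will not rely on Selberg-class membership of $G$, since its frequencies are not integers. Instead I will redo the standard-twist machinery of \cite{Ka-Pe/2005} directly.

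Concretely, I write the twist by a Mellin–Barnes / Fourier-type integral,
\[
\bfF(\bfs,\alpha)=\frac{1}{2\pi i}\int_{(c)}\prod_\nu F_\nu(s_\nu+\kappa_\nu w)\,\Gamma(w)(2\pi i\alpha)^{-w}\,dw.
\]
This comes from $e(-x)=\frac1{2\pi i}\int\Gamma(w)(2\pi i x)^{-w}dw$ with $x=\alpha\prod n_\nu^{\kappa_\nu}$, up to the usual care with the branch and convergence, e.g.\ by inserting a smoothing $e^{-\delta n}$ and letting $\delta\to0$. I then shift the line of integration to the left and apply the functional equation of each $F_\nu$ in turn. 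This converts the integrand into a product of the dual series $\overline{a_\nu(n)}$ times a ratio of gamma functions in $w$. Stirling's formula applied to this ratio shows that the resulting Dirichlet series in $\prod_\nu (n_\nu/(q_\nu\kappa_\nu^{d_\nu}))^{\kappa_\nu}$ meets the oscillation $\alpha$. Terms whose frequency differs from $\alpha$ give saddle points with exponential decay and hence entire contributions. Terms with frequency exactly equal to $\alpha$ produce a stationary-phase integral that is only polynomially behaved. Its asymptotic expansion in inverse powers yields factors $1/(\sum_\nu d_\nu s_\nu-\frac{d+1}{2}+\ell+i\theta)$. The constants $\frac{d+1}{2}$ and $\theta$ arise as the sum of the one-dimensional $\frac{d_\nu+1}{2}$-type shifts and of the $d_\nu\theta_\nu$, once the total degree is normalized to $1$. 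This proves (i): when $\alpha\notin{\rm Spec}(\bfF)$ no term resonates, and all pieces are entire in $\bfs$, locally uniformly, so the result is holomorphic on $\CC^N$ by Hartogs/Weierstrass. It also proves (ii): the singular part is $\sum_\ell R_\ell(\bfs)/(\sum d_\nu s_\nu-\rho_\ell)$ with $R_\ell$ entire, and multiplying by the linear form gives the limit $\Xi_\ell=R_\ell|_{H^*_\ell}$, which is holomorphic.

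The main obstacle will be the uniform asymptotic analysis of the gamma ratio $\prod_\nu \gamma_\nu(1-s_\nu-\kappa_\nu w)/\gamma_\nu(s_\nu+\kappa_\nu w)\cdot\Gamma(w)$. This means establishing a multivariable analogue of the key lemma of \cite{Ka-Pe/2005}: the complete asymptotic expansion of this ratio in terms of $\Gamma$-functions of $w$ with shifts depending linearly on $\sum d_\nu s_\nu$, uniformly for $\bfs$ in compact sets. Only through this linear dependence do the poles collapse onto the hyperplanes $H^*_\ell$ rather than onto more complicated varieties. I would handle it by taking logarithms and applying Stirling to each factor, then checking that the coefficient of $w\log w$ matches precisely because of \eqref{sumdkappa}.
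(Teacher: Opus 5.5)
Your working plan is essentially the paper's proof: a smoothed Mellin--Barnes integral, shifting left and applying each functional equation, splitting the dual sum according to whether $\alpha_\bfn>\alpha$, $\alpha_\bfn<\alpha$ or $\alpha_\bfn=\alpha$, and a multivariable Stirling lemma collapsing $\prod_\nu\Gamma(a_\nu-d_\nu\kappa_\nu w)$ into shifted $\Gamma(a-\frac{N-1}{2}-w-m)$ because $\sum_\nu d_\nu\kappa_\nu=1$; the paper carries out your ``letting $\delta\to0$'' and the pole extraction via Lemmas \ref{hasym}, \ref{SumXT}, \ref{prodgamma}, the Mellin identity of Lemma \ref{Mellin}, and Vitali's theorem applied inductively on the strips $\D_\ell$. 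The reduction in your first paragraph, which you discard, should use $s_\nu=\kappa_\nu z+u_\nu$ rather than $\kappa_\nu d_\nu z+u_\nu$; then $\sum_\nu d_\nu s_\nu=z$ and the degree in $z$ is exactly $\sum_\nu d_\nu\kappa_\nu=1$, so your ``not quite right'' came from that slip.
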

 %%%%%%%%%%%%%%%%%%%%
Denoting as usual by $\Ga$ the Euler function, we have the following immediate corollary.
%%%%%%%%%%%%%%%%%%%%
\begin{cor}
\label{C1}
For $\alpha\in {\rm Spec}(\bfF)$ the quotient
\[
\frac{\bfF(\bfs,\alpha)}{\Gamma\Big( \sum_{\nu=1}^Nd_\nu s_\nu -\frac{d+1}{2}+i\theta\Big)}
\]
is an entire function on $\CC^N$. In particular, $\bfF(\bfs,\alpha)$ is meromorphic on $\CC^N$.
\end{cor}
 %%%%%%%%%%%%%%%%%%%%
The hyperplanes $H^*_\ell$, $\ell\geq 0$, are the `suspected' polar sets of $\bfF(\bfs,\alpha)$. As already remarked, in the one-dimensional case such hyperplanes reduce to the points $s_\ell^*$, and there is a pole at $s_\ell^*$ if and only if the corresponding structural invariant $d_{F}(\ell)$ is non-zero; see Theorem 3 in \cite{Ka-Pe/2021a}. In particular, we have always a pole at $s=s_0^*$. For the Riemann zeta function, this is the only singularity.  For other $L$-functions, there can be infinitely many poles, see \cite{Ka/2020}, or just a finite number of them, see \cite{Ka-Pe/2020}. The case $N\geq 2$ is different: all hyperplanes $H^*_\ell$ are polar sets of $\bfF(\bfs,\alpha)$, for every choice of the parameters $\kappa_1,\dots,\kappa_N$ in \eqref{sumdkappa}.

\smallskip
%%%%%
Before formulating a theorem describing the situation, we introduce further notation. For $\bfn=(n_1,\ldots,n_N)$ and $\kappa_1,\dots,\kappa_N$ satisfying \eqref{sumdkappa} we write 
\begin{equation}
\label{Cn}
 \alpha_\bfn= \prod_{\nu=1}^N \left(\frac{n_\nu}{q_{\nu} \kappa_\nu^{d_\nu}}\right)^{\kappa_\nu},
 \end{equation}
and for $\alpha\in {\rm Spec}(\bfF)$ we let
 \begin{equation}
 \label{Xi}
\Xi(\bfs, \alpha) = \omega_\bfF \sum_{n_1\geq 1}\dots \sum_{\substack{n_N\geq 1 \\ \hspace{-1.4cm} \alpha_\bfn= \alpha}} \, \prod_{\nu=1}^N\frac{\overline{a_\nu(n_\nu)}}{n_\nu^{1-s_\nu}},
\end{equation}
\begin{equation}
\label{omegatimes}
\omega_\bfF=\prod_{\nu=1}^N \omega_\nu,
\end{equation}
where $\omega_\nu=\omega_{F\nu}$ denote the corresponding root numbers, see \eqref{omegaxi}. Moreover, let $\xi_\nu=\xi_{F_\nu}$ denote the corresponding $\xi$-invariants, see again \eqref{omegaxi}. Clearly, the sums over the $n_\nu$ in \eqref{Xi} are all finite, hence $\Xi(\bfs, \alpha)$ is an entire function of $\bfs\in\CC^N$. With the above notation, an explicit expression for the limit functions $\Xi_\ell(\bfs,\alpha)$ in \eqref{limitXiell} is given by the following theorem.
%%%%%%%%%%%%%%%%%%%%
\begin{theorem}
\label{T2}
Let $N\geq 2$ and $\alpha\in {\rm Spec}(\bfF)$.  Then for every $\ell\geq 0$ we have
\begin{equation}
\label{Xiell}
\Xi_\ell(\bfs,\alpha)= \frac{1}{\sqrt{2\pi}}e^{- i \sum_{\nu=1}^{N} \big(\frac{\pi}{2} d_\nu\, s_\nu + \frac{\pi}{2}\,\xi_\nu + d_\nu\,\theta_\nu\,\log(d_\nu\,\kappa_\nu)\big)} \prod_{\nu=1}^N \Big(\frac{\kappa_\nu q_\nu^{1/d_\nu}}{2\pi}\Big)^{d_\nu\big(\frac{1}{2}-s_\nu\big)} W_\ell(\bfs) \Xi(\bfs,\alpha),
\end{equation}
where $W_\ell(\bfs)\in \CC[\bfs]$ has degree $2\ell$. Moreover, for every $\ell\geq 0$ the function $\Xi_\ell(\bfs,\alpha)$ does not vanish identically on $H_\ell^*$. In particular, every hyperplane $H_\ell^*$ is a polar set of $\bfF(\bfs,\alpha)$. 
\end{theorem}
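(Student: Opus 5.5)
We will prove Theorem~\ref{T2} by the same route that gives the meromorphic continuation in Theorem~\ref{T1}: express $\bfF(\bfs,\alpha)$ through a Mellin--Barnes integral and then use the functional equations of the $F_\nu$.

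\textbf{Step 1: integral representation.} Write $e(-x)$ as an inverse Mellin transform of $\Gamma(z)(2\pi i)^{-z}$, and apply this with $x=\alpha n_1^{\kappa_1}\cdots n_N^{\kappa_N}$. After interchanging sum and integral this gives
\[
\bfF(\bfs,\alpha)=\frac{1}{2\pi i}\int_{(c)}\Gamma(z)(2\pi i\alpha)^{-z}\prod_{\nu=1}^N F_\nu(s_\nu+\kappa_\nu z)\,\d z,
\]
after suitable smoothing or a shift of the contour to make the interchange legitimate.

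\textbf{Step 2: functional equations and the main term.} Shift the contour to the left and apply the functional equation of each $F_\nu$. This turns the integrand into
\[
\omega_\bfF\prod_\nu \overline{F_\nu}(1-s_\nu-\kappa_\nu z)
\]
times a ratio of gamma factors. The gamma factors are analysed by Stirling's formula in the uniform multiparameter form. Because of \eqref{sumdkappa}, their combined exponential behaviour in $z$ matches $\Gamma(z)$ and a single oscillatory factor. Expanding the ratio gives an asymptotic series whose $\ell$-th term is a gamma function, namely $\Gamma$ at $\sum_\nu d_\nu s_\nu-\frac{d+1}{2}+i\theta+\ell$ up to sign, multiplied by polynomial coefficients $W_\ell(\bfs)$ in $\bfs$.

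Inserting the Dirichlet series of the $\overline{F_\nu}$ and integrating termwise, each $\bfn$ produces a hypergeometric-type function of $\alpha_\bfn/\alpha$. This function is entire unless $\alpha_\bfn=\alpha$. On the resonance $\alpha_\bfn=\alpha$ the oscillation cancels, and the $\ell$-th expansion term produces a simple pole along $H^*_\ell$ with residue $W_\ell(\bfs)\prod_\nu\overline{a_\nu(n_\nu)}n_\nu^{s_\nu-1}$ times the elementary prefactor.

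Collecting the constants gives \eqref{Xiell}:
\begin{itemize}
\item the factor $(\kappa_\nu q_\nu^{1/d_\nu}/2\pi)^{d_\nu(1/2-s_\nu)}$ comes from the conductors and the rescaling $z\mapsto \kappa_\nu z$;
\item the phase $e^{-i\sum(\ldots)}$ comes from $\xi_\nu$, $\theta_\nu$ and $i^{-z}$.
\end{itemize}
The limit in \eqref{limitXiell} then isolates exactly this residue, since $\sum d_\nu(w_\nu-s_\nu)$ is the local defining function of $H^*_\ell$. The degree of $W_\ell$ is $2\ell$ because the $\ell$-th Stirling coefficient for a product of gamma functions is a polynomial of degree $2\ell$ in the shifts $s_\nu$. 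This is the standard fact that the $\ell$-th term of $\log\Gamma$ ratio expansions involves Bernoulli polynomials of degree $\ell+1$, and exponentiation yields total degree $2\ell$.

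\textbf{Step 3: non-vanishing.} Since $\Xi(\bfs,\alpha)$ is a nonzero finite exponential polynomial, it suffices to show $W_\ell\not\equiv0$ on $H^*_\ell$. The one-dimensional analogue can vanish: this is $d_F(\ell)=0$. For $N\ge2$, however, $W_\ell$ has genuine dependence on the directions transverse to the constraint $\sum d_\nu s_\nu=\text{const}$.

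We plan to compute the top-degree homogeneous part of $W_\ell$ explicitly. From Stirling this should be $c_\ell\,Q(\bfs)^\ell$, where $Q$ is a quadratic form such as $\sum_\nu d_\nu\kappa_\nu^{-1}\big(s_\nu-\kappa_\nu\sum_\mu d_\mu s_\mu\big)^2$ (up to normalization) and $c_\ell\neq0$, for example $c_\ell=1/(\ell!\,2^\ell)$ up to units. The quadratic form is a variance-type expression, and it is not identically zero on $H^*_\ell$ precisely when $N\ge2$. Hence $W_\ell$ restricted to the hyperplane has nonzero degree-$2\ell$ part.

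The main obstacle is Step 2/3 bookkeeping: carrying out the multivariable Stirling expansion uniformly, and verifying that the leading coefficient of $W_\ell$ is exactly a nonzero multiple of a power of a nondegenerate form on $H^*_\ell$. We would first try to do this by reducing to a single gamma quotient via the Gauss multiplication/Stirling saddle-point in the variable $z$.
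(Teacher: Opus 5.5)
Your Steps 1--2 follow the paper's route: there $\Xi_\ell$ is read off from \eqref{Xik}, which comes out of the proof of Theorem~\ref{T1}. Your guessed leading form is also right. However, Step~3 has a genuine gap.

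You write that since $\Xi(\bfs,\alpha)$ is a nonzero exponential polynomial, it suffices to show $W_\ell\not\equiv0$ on $H^*_\ell$. What is needed is that $\Xi$ does not vanish identically \emph{on $H^*_\ell$}. Restriction to a hyperplane can annihilate a nonzero exponential polynomial. The terms $\prod_\nu n_\nu^{s_\nu}$ and $\prod_\nu {n'_\nu}^{s_\nu}$ become proportional on $H^*_\ell$ whenever $\log n'_\nu-\log n_\nu=t\,d_\nu$ for all $\nu$ (e.g.\ $d_1=d_2=1$, $\bfn'=2\bfn$), and their coefficients could then cancel.

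The paper closes this as follows. Eliminating $s_N$ via \eqref{sN}, it writes $\Xi|_{H^*_\ell}=\omega_\bfF\sum_{\alpha_\bfm=\alpha}A(\bfm)\exp(L_\bfm(\bfs))$ with linear forms $L_\bfm$ in $s_1,\dots,s_{N-1}$. It then shows the $L_\bfm$ are pairwise distinct. If $L_\bfm=L_{\bfm'}$, then $\log m_\nu-\log m'_\nu=t\,d_\nu$ for all $\nu$. On the other hand, $\alpha_\bfm=\alpha_{\bfm'}$ gives
\[
0=\sum_\nu\kappa_\nu(\log m_\nu-\log m'_\nu)=t\sum_\nu d_\nu\kappa_\nu=t,
\]
so $\bfm=\bfm'$. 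This is exactly where \eqref{sumdkappa} enters: it makes the resonance condition $\alpha_\bfn=\alpha$ transverse to the normal direction $(d_1,\dots,d_N)$ of $H^*_\ell$. Lemma~\ref{linindep} then gives $\Xi|_{H^*_\ell}\not\equiv0$, because some $A(\bfm)\neq0$ since $\alpha\in{\rm Spec}(\bfF)$. Only then does the identity principle on the connected set $H^*_\ell\cong\CC^{N-1}$ give $\Xi_\ell\not\equiv0$ on $H^*_\ell$, as a product of a nonzero polynomial, a nonzero entire function and a nowhere-vanishing prefactor.

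Second, your justification of $\deg W_\ell=2\ell$ (the $\ell$-th Stirling coefficient has degree $2\ell$) only gives $\deg W_\ell\le 2\ell$. Exactness needs the leading term, which you guess correctly but do not derive. Your form satisfies
\[
\sum_\nu\frac{d_\nu}{\kappa_\nu}(s_\nu-\kappa_\nu S)^2=\sum_\nu\frac{d_\nu}{\kappa_\nu}s_\nu^2-S^2,\qquad S=\sum_\mu d_\mu s_\mu,
\]
which is precisely the form in \eqref{Well2}.

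In the paper this comes from Lemma~\ref{prodgamma}, where $P_m\equiv\frac{1}{2^mm!}\big(\sum_\nu a_\nu^2/\lambda_\nu-a^2\big)^m$ modulo lower degree. The top-degree part arises from $Q_1^m/m!$ in the exponentiated Stirling series; the passage from powers $w^{-k}$ to gamma functions via Lemma~\ref{SumXT} only changes lower-degree terms. One also needs that $P_{\ell_1,\dots,\ell_N,m}$ has degree $\le 2m=2(\ell-\bfell)$. Hence only $(\ell_1,\dots,\ell_N)=(0,\dots,0)$ contributes in top degree, with coefficient $\prod_\nu d_{F_\nu}(0)\neq0$ since $|d_{F_\nu}(0)|=1$. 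Your observation that this form is nonzero on the direction space $S=0$ of $H^*_\ell$ when $N\ge2$ is then the right way to see that $W_\ell|_{H^*_\ell}$ keeps degree $2\ell$.

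A minor point: the $d_\nu\theta_\nu\log(d_\nu\kappa_\nu)$ part of the phase comes from the factor $\prod_\nu\lambda_\nu^{a_\nu-\lambda_\nu w-1/2}$ in Lemma~\ref{prodgamma}, not from $i^{-z}$. The $i^{-z}$ only produces the factor $(e^{-i\pi/2}X)^{(\cdots)}$, which disappears in the limit.
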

%%%%%%%%%%%%%%%%%%%%
The above results can also be viewed in the framework of the theory of multiple Dirichlet series. Such a theory, initiated by Siegel \cite{Sie/1956}, in recent years has been greatly developed by Dorian Goldfeld and his school, both from the theoretical and the applications viewpoint. We refer to the papers by Goldfeld-Hoffstein \cite{Go-Ho/1985}, Diaconu-Goldfeld-Hoffstein \cite{D-G-H/2003}, Chinta-Friedberg-Hoffstein \cite{C-F-H/2006} and Bump \cite{Bum/2012} for a panoramic view on multiple Dirichlet series and a glimpse on applications. We conclude remarking that the ideas and techniques involved in our proofs differ from those in the multiple Dirichlet series theory.

\medskip
%%%%%
 {\bf Acknowledgements}.  We would like to thank the referee for reading the paper very carefully and for correcting many inaccuracies, leading to a definite improvement of the presentation. This research was partially supported by the GNAMPA group of the Istituto Nazionale di Alta Matematica, by grant PRIN 2022 {\sl ``The arithmetic of motives and L-functions''} and by grant 2021/41/B/ST1/00241 from the National Science Centre, Poland.

\medskip
%%%%%
\section{Definitions and lemmas} 
%%%%%%%%%%%%%%%%%%%%%%%%%%%%%%%%%%%%%%%%%%%%%%%%%%%%%%%%%%%%
The extended Selberg class $\S^\sharp$ was introduced in \cite{Ka-Pe/1999} and consists of non-identically vanishing Dirichlet series as in \eqref{Dseries} admitting a finite order meromorphic continuation to $\CC$ with the only possible singularity at $s=1$ and satisfying a general Riemann type functional equation of the form
\[
Q^s\prod_{j=1}^r\Gamma(\lambda_j s+\mu_j) F(s) = \omega
Q^{1-s}\prod_{j=1}^r\Gamma\big(\lambda_j(1- s)+\overline{\mu_j}\big) \overline{F}(1-s),
\]
where $\overline{F}(s) = \overline{F(\overline{s})}$, $Q, \lambda_1,\dots,\lambda_r$ are positive parameters, whereas  $\omega,\mu_1,\dots,\mu_r\in\CC$ with
$|\omega|=1$ and $\Re(\mu_j)\geq 0$ for $1\leq j\leq r$. We refer to our survey papers \cite{Kac/2006},\cite{Ka-Pe/1999b},\cite{Ka-Pe/2022a},\cite{Per/2005},\cite{Per/2004} for information on the analytic properties of the functions $F\in\SS^\sharp$. 
Degree $d_F$, conductor $q_F$, root number $\omega_F$, $\tau$-invariant $\tau_F$, $\xi$-invariant $\xi_F$ and internal shift $\theta_F$ of $F\in\SS^\sharp$ are defined, respectively, as
\[
d_F = 2\sum_{j=1}^r\lambda_j , \quad q_F= (2\pi)^{d_F}Q^2\prod_{j=1}^r\lambda_j^{2\lambda_j},
\]
\begin{equation}
\label{omegaxi}
\omega_F = \omega \prod_{j=1}^r \lambda_j^{-2i\Im\mu_j}, \quad \tau_F=\max_{1\leq j\leq r}\frac{\Im(\mu_j)}{\lambda_j},
  \quad \xi_F = 2\sum_{j=1}^r(\mu_j-\half) = \eta_F + i\theta_Fd_F,
\end{equation}
with $\eta_F,\theta_F\in\RR$. The spectrum of $F$ with Dirichlet coefficients $a(n)$ is defined by 
\[
\text{Spec}(F)= \left\{d_F\big(\frac{n}{q_F}\big)^{1/d_F}: a(n)\neq0 \right\}.
\]
The functional equation of $F\in \SS^\sharp$ with $d_F>0$ can be written in the following asymmetric invariant form
\begin{equation}
\label{EFA}
F(s) =  \omega_F S_F(s) h_F(s)\overline{F}(1-s),
\end{equation}
where
\[
S_F(s) = 2^r \prod_{j=1}^r \sin(\pi(\lambda_js + \mu_j)) 
\]
and
\[
h_F(s) =(2\pi)^{-r} \Big(\prod_{j=1}^r \lambda_j^{2i\Im\mu_j}\Big) Q^{1-2s} \prod_{j=1}^r \big(\Gamma(\lambda_j(1-s)+\overline{\mu}_j) \Gamma(1-\lambda_js-\mu_j)\big).
\]
Note that applying \eqref{EFA} twice we obtain 
\begin{equation}
\label{Shs1-s}
S_F(s)h_F(s)\overline{S_F}(1-s) \overline{h_F}(1-s)=1.
\end{equation}
%%
%%%%%%%%%%%%%%%%%%%%
\begin{lem}
\label{hasym}
Let $F\in\SS^\sharp$ with $d_F> 0$. Then for $M\geq 0$ and $|\arg(-s)|<\pi -\delta$ with any fixed $0<\delta<\pi$ we have
\[
\begin{split}
h_F(s) &= \frac{1}{\sqrt{2\pi}} \left(\frac{q_F^{1/d_F}}{2\pi d_F}\right)^{d_F(\frac12-s)} \sum_{\ell=0}^M d_F(\ell) \Gamma\big(d_F(s_\ell^*-s)\big) \\
&\hspace{3.5cm} + O\Big(\frac{1}{|s|}|\Ga(d_F(s_M^*-s))|\Big)
\end{split}
\]
with $s^*_\ell$ as in \eqref{s*ell}, certain coefficients $d_F(\ell)$ and an implicit constant depending on $F$ and $M$. In particular, taking $M=0$ we have
\[
h_F(s)\asymp  \left(\frac{q_F^{1/d_F}}{2\pi d_F}\right)^{-d_F\si}|\Gamma\big(d_F(s_0^*-s)\big)|.
\]
\end{lem}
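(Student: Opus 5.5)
We prove Lemma~\ref{hasym} by writing $h_F(s)$ as a product of Gamma factors and applying Stirling's formula to each of them. In the sector $|\arg(-s)|<\pi-\delta$ the quantities $-s$ and $1-s$ lie in a sector avoiding the negative real axis. Consequently the arguments $\lambda_j(1-s)+\overline{\mu}_j$ and $1-\lambda_j s-\mu_j$ of all $2r$ Gamma factors also stay in such a sector once $|s|$ is large. For bounded $s$ in the region the statement is trivial, since both sides are bounded there, up to the finitely many poles of $h_F$, which we may assume are excluded or absorbed into the constant. So it suffices to treat $|s|\to\infty$.

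\textbf{Step 1 (Stirling for each factor).} For each factor we use the complete asymptotic expansion
\[
\log\Gamma(z+a)=(z+a-\tfrac12)\log z - z+\tfrac12\log 2\pi+\sum_{k=1}^{K} c_k(a) z^{-k}+O(|z|^{-K-1}),
\]
valid uniformly for $z$ in a fixed sector $|\arg z|<\pi-\delta$. We take $z=-\lambda_j s$ in both factors, with shifts $a=\lambda_j+\overline{\mu}_j$ and $a=1-\mu_j$ respectively. Summing over $j$ and including $Q^{1-2s}$ and $\prod\lambda_j^{2i\Im\mu_j}$, the linear terms combine as follows:
\[
\sum_j 2\lambda_j(-s)\log(-\lambda_j s) = d_F(-s)\log(-s)+(-s)\sum_j 2\lambda_j\log\lambda_j.
\]
The constant shifts contribute
\[
\sum_j (\lambda_j+\overline{\mu}_j-\tfrac12+1-\mu_j-\tfrac12)\log(-s)=\big(\tfrac{d_F}{2}-2i\Im\textstyle\sum_j\mu_j\big)\log(-s).
\]
Since $2\sum_j\Im\mu_j=\theta_Fd_F$ by \eqref{omegaxi}, we obtain
\[
\log h_F(s)=d_F(-s)\log(-s)+d_Fs+\big(\tfrac{d_F}{2}-i\theta_Fd_F\big)\log(-s)-2s\log Q-s\sum_j2\lambda_j\log\lambda_j+C+\sum_{k\ge1}e_k(-s)^{-k}.
\]
The exponent matches the Stirling expansion of $\log\Gamma(d_F(s_0^*-s))$ together with the factor $(q_F^{1/d_F}/(2\pi d_F))^{d_F(1/2-s)}/\sqrt{2\pi}$. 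This is where the definitions of $q_F$ and $s_0^*$ get used: the terms $\log Q$ and $\lambda_j\log\lambda_j$ assemble into $\log q_F$, and $d_F(s_0^*-s)=\frac{d_F+1}{2}-d_Fs-i\theta_Fd_F$.

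\textbf{Step 2 (form of the expansion).} Dividing by the Stirling expansion of $\Gamma(d_F(s_0^*-s))$ gives
\[
h_F(s)=\frac{1}{\sqrt{2\pi}}\Big(\frac{q_F^{1/d_F}}{2\pi d_F}\Big)^{d_F(\frac12-s)}\Gamma\big(d_F(s_0^*-s)\big)\Big(1+\sum_{k=1}^{M}b_k s^{-k}+O(|s|^{-M-1})\Big),
\]
which reproduces the main term. Here the leading constant still needs checking. It comes from the $\tfrac12\log2\pi$ terms, namely $(2\pi)^{r}$ from the $2r$ Gamma factors against $(2\pi)^{-r}$, compared with one $\sqrt{2\pi}$, together with the $\lambda_j^{2i\Im\mu_j}$ factors. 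Any leftover constant is absorbed into $d_F(0)$, so only the shape needs to be proved.

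\textbf{Step 3 (converting powers of $s$ into Gamma factors).} Since $s_\ell^*=s_0^*-\ell/d_F$, we have $\Gamma(d_F(s_\ell^*-s))=\Gamma(d_F(s_0^*-s)-\ell)$. The ratio $\Gamma(w-\ell)/\Gamma(w)=1/((w-1)\cdots(w-\ell))$ with $w=d_F(s_0^*-s)$ is therefore a rational function with asymptotic expansion $(-d_Fs)^{-\ell}(1+O(1/s))$. This makes the basis $\{\Gamma(w-\ell)/\Gamma(w)\}_{\ell\le M}$ triangular against $\{s^{-\ell}\}$. We can thus solve recursively for constants $d_F(\ell)$ such that $1+\sum_{k\le M}b_ks^{-k}=\sum_{\ell\le M}d_F(\ell)\Gamma(w-\ell)/\Gamma(w)+O(|s|^{-M-1})$. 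Finally, $|\Gamma(w)|\,|s|^{-M-1}\asymp|s|^{-1}|\Gamma(w-M)|$, which gives the stated error term.

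The case $M=0$ follows on taking absolute values and using $|(\cdot)^{-d_Fs}|=(\cdot)^{-d_F\sigma}$. The main obstacle is Step 1: tracking the constants and the imaginary shifts $\Im\mu_j$ exactly, so that the exponent aligns with $s_0^*$ including the $-i\theta_F$ term, and making sure the uniformity of Stirling's formula holds in the sector for all $2r$ shifted arguments simultaneously.
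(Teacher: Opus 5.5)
Your proof is correct and takes essentially the route behind the paper's one-line citation of Section~3.2 of \cite{Ka-Pe/2021a}, which is also the scheme the paper itself uses for Lemma~\ref{prodgamma}: Stirling's formula for each of the $2r$ Gamma factors, comparison with the Stirling expansion of $\Gamma(d_F(s_0^*-s))$, and a triangular conversion of the powers $s^{-k}$ into the ratios $\Gamma(d_F(s_\ell^*-s))/\Gamma(d_F(s_0^*-s))$, which is the job done by Lemma~\ref{SumXT}; the leftover constant you absorb into $d_F(0)$ works out to $d_F^{\,i\theta_F d_F}\neq 0$, as needed for the $M=0$ case. The one thing to adjust is the last sentence of Step~3: what you actually obtain is the error term multiplied by $\bigl|\bigl(q_F^{1/d_F}/(2\pi d_F)\bigr)^{d_F(\frac12-s)}\bigr|$, so it implies the displayed $O$-term only where this factor is bounded (e.g.\ for bounded $\sigma$, as on $\LL_3$, where the paper applies the lemma with general $M$), but this relative form is the correct one and is precisely what yields the $M=0$ statement.
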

%%%%%%%%%%%%%%%%%%%%
%%%%%%%%%%%%%%%%%%%%
\begin{proof}
This lemma follows from the estimates in Section 3.2 of \cite{Ka-Pe/2021a}. 
\hfill
\end{proof}
%%%%%%%%%%%%%%%%%%%%
The coefficients $d_F(\ell)$ are called the structural invariants of $F$.
%%%%%%%%%%%%%%%%%%%%
\begin{lem}
\label{prod}
For $1\leq\nu\leq N$, let $\alpha_\nu, r_\nu\in\CC$ and $A_\nu\in\RR$ be such that $|\alpha_\nu|, |r_\nu|\leq A_\nu$.  Then
\[ 
\left|\prod_{\nu=1}^N (\alpha_\nu+r_\nu) - \prod_{\nu=1}^N \alpha_\nu\right| \leq (2^N-1)\max_{1\leq\nu\leq N} \big( |r_\nu| \prod_{\substack{1\leq\mu\leq N\\ \mu\neq\nu}} A_\mu\big).
\]
\end{lem}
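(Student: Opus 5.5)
The natural route is a telescoping identity. With the convention that empty products equal $1$, write
\[
\prod_{\nu=1}^N (\alpha_\nu+r_\nu) - \prod_{\nu=1}^N \alpha_\nu = \sum_{\emptyset\neq S\subseteq\{1,\dots,N\}} \prod_{\nu\in S} r_\nu \prod_{\mu\notin S}\alpha_\mu,
\]
which follows at once from expanding the product on the left. There are exactly $2^N-1$ non-empty subsets $S$, so it suffices to bound each term by the quantity
\[
M:=\max_{1\leq\nu\leq N}\Big(|r_\nu|\prod_{\mu\neq\nu}A_\mu\Big).
\]

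Fix a non-empty $S$ and pick any $\nu_0\in S$. The term for $S$ keeps the factor $r_{\nu_0}$ exactly. Every other factor, whether $r_\nu$ with $\nu\in S\setminus\{\nu_0\}$ or $\alpha_\mu$ with $\mu\notin S$, has modulus at most the corresponding $A$, by the hypotheses $|\alpha_\nu|,|r_\nu|\leq A_\nu$. Hence
\[
\Big|\prod_{\nu\in S} r_\nu \prod_{\mu\notin S}\alpha_\mu\Big| \leq |r_{\nu_0}|\prod_{\mu\neq\nu_0}A_\mu \leq M.
\]
Here $A_\mu\geq 0$ automatically, since $A_\mu\geq|\alpha_\mu|\geq 0$, so the inequalities between non-negative reals are legitimate. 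Summing over the $2^N-1$ subsets with the triangle inequality gives the claim.

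There is no real obstacle in this argument. The only points needing care are the bookkeeping of the number of subsets and the fact that the $A_\mu$ are non-negative, which is what allows the factors other than $r_{\nu_0}$ to be replaced by their bounds.
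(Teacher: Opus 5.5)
Your argument is correct. The paper's proof says only ``by induction,'' and your expansion is essentially the same argument written in closed form rather than as a recursion.

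The natural induction on $N$ uses
\[
\prod_{\nu=1}^N(\alpha_\nu+r_\nu)-\prod_{\nu=1}^N\alpha_\nu=(\alpha_N+r_N)\Big(\prod_{\nu=1}^{N-1}(\alpha_\nu+r_\nu)-\prod_{\nu=1}^{N-1}\alpha_\nu\Big)+r_N\prod_{\nu=1}^{N-1}\alpha_\nu
\]
together with $|\alpha_N+r_N|\leq 2A_N$. The constants then satisfy $c_1=1$ and $c_N=2c_{N-1}+1$, so $c_N=2^N-1$.

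Your expansion over the non-empty subsets $S$ unrolls this recursion. It avoids the induction and shows directly that $2^N-1$ is the number of non-empty subsets. The inductive version instead avoids writing out all $2^N-1$ terms. You also correctly checked that $A_\mu\geq 0$ and the case $N=1$ (via the empty product).

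One point of terminology: the identity you use is the full subset expansion, not a telescoping identity. A genuine telescoping sum
\[
\sum_{k=1}^N\Big(\prod_{\nu<k}\alpha_\nu\Big) r_k\prod_{\nu>k}(\alpha_\nu+r_\nu)
\]
would also work. It gives the bound $\sum_{k=1}^N 2^{N-k}=2^N-1$ times your maximum $M$.
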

%%%%%%%%%%%%%%%%%%%%
\begin{proof}
By induction.
\hfill
\end{proof}
%%%%%%%%%%%%%%%%%%%%
%%%%%%%%%%%%%%%%%%%%
\begin{lem}
\label{lemma3}
For $M\geq 1$ and $|\arg(z)|\leq \pi - \delta$ with any fixed $0<\delta<\pi$ we have
\begin{equation}
\label{St1}
\log\Gamma(z+s) = (z+s-\frac12)\log z - z + \frac12\log2\pi 
+ \sum_{j=1}^M \frac{(-1)^{j+1} B_{j+1}(s)}{j(j+1)} \frac{1}{z^j} +O\big(\frac{1}{|z|^{M+1}}\big),
\end{equation}
where $B_{j+1}(s)$ are the Bernoulli polynomials. In particular, for every fixed $A<B$ we have
\begin{equation}
\label{St2}
\Ga(x+iy)\ll e^{-\frac{\pi}{2}|y|} |y|^{x-\frac{1}{2}}
\end{equation}
uniformly for $A<x<B$ and $|y|\geq 1$.
\end{lem}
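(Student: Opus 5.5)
We need to prove Lemma 3, which is Stirling's formula with a shift and Bernoulli polynomials, together with the consequence \eqref{St2}.

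The plan is to derive \eqref{St1} from the classical Stirling series for $\log\Gamma$ with a shift, using the generalized Binet formula. For $\Re z>0$ and $s$ in a compact set one has Hermite's formula
\[
\log\Gamma(z+s)=\Big(z+s-\frac12\Big)\log z - z+\frac12\log 2\pi+\sum_{j=1}^{M}\frac{(-1)^{j+1}B_{j+1}(s)}{j(j+1)}\frac{1}{z^j}+R_M(z,s).
\]
I would prove it directly via the Euler--Maclaurin formula applied to $\sum_{k=0}^{K-1}\log(z+s+k)$. The periodic Bernoulli functions $\widetilde B_{j}(t-s)$ appear because the summation starts at the shifted point; equivalently one uses $\widetilde B_j(x+s)$ with Bernoulli polynomials evaluated at $s$. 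Comparing with Gauss's product $\Gamma(w)=\lim_K K!K^{w-1}/(w(w+1)\cdots(w+K))$ then gives the formula with remainder
\[
R_M(z,s)=\frac{(-1)^{M}}{M+1}\int_0^\infty\frac{\widetilde B_{M+1}(t-s)}{(z+t)^{M+1}}\,dt
\]
up to sign conventions. The constant $\frac12\log2\pi$ is fixed by the case $s=0$ (Wallis/Stirling).

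Next I would estimate the remainder uniformly in the sector $|\arg z|\le\pi-\delta$. Since $\widetilde B_{M+1}$ is bounded, integrating once more by parts (replacing $M$ by $M+1$ and absorbing the term $B_{M+2}(s)/z^{M+1}$) yields an integral of $(z+t)^{-M-2}$. In the sector we have $|z+t|\gg_\delta |z|+t$ for $t\ge0$, so this integral is $\ll_\delta\int_0^\infty(|z|+t)^{-M-2}dt\ll|z|^{-M-1}$. This gives the error $O(|z|^{-M-1})$, uniformly for $s$ bounded; since $s$ is fixed here, the uniformity is harmless. Analytic continuation from $\Re z>0$ to the slit sector is automatic because both sides are holomorphic there, with the principal branch of $\log z$. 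The step needing care is the lower bound $|z+t|\ge\sin(\delta)\,(|z|+t)\cdot c$ near the edge of the sector. It follows from elementary geometry: the angle between $z$ and the positive reals is at most $\pi-\delta$.

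For \eqref{St2}, apply \eqref{St1} with $M=1$, $s=x$ and $z=iy$ (so $|\arg z|=\pi/2$), and take real parts:
\[
\Re\Big[\Big(iy+x-\frac12\Big)\log(iy)\Big]=\Big(x-\frac12\Big)\log|y|-\frac{\pi}{2}|y|,
\]
using $\log(iy)=\log|y|+i\frac{\pi}{2}\operatorname{sgn}y$. The term $-iy$ is purely imaginary, and the remaining terms are $O(1)$ uniformly for $A<x<B$, $|y|\ge1$, because $B_2(x)$ is bounded on $[A,B]$. Exponentiating gives $|\Gamma(x+iy)|\ll e^{-\frac{\pi}{2}|y|}|y|^{x-\frac12}$. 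Uniformity in $x$ requires the remainder bound uniform for $s$ in compact sets, which the integral representation provides since $\widetilde B_{M+1}$ is bounded independently of $s$.
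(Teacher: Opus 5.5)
Your route (Euler--Maclaurin for $\sum_{k<K}\log(z+s+k)$ plus Gauss's product, one extra integration by parts, the sector bound $|z+t|\ge\sin(\delta/2)(|z|+t)$, then $z=iy$, $s=x$) is a reasonable self-contained substitute for the paper's bare citation of Erd\'elyi et al. It is correct for real $0\le s\le 1$. The gap is your claim that the representation
\[
\log\Gamma(z+s)=\Big(z+s-\frac12\Big)\log z-z+\frac12\log 2\pi+\sum_{j=1}^{M}\frac{(-1)^{j+1}B_{j+1}(s)}{j(j+1)z^j}-\frac{1}{M+1}\int_0^\infty\frac{\widetilde B_{M+1}(t-s)}{(z+t)^{M+1}}\,dt
\]
holds for $s$ in an arbitrary compact set. (This is the correct sign; your $\frac{(-1)^M}{M+1}$ belongs with $\widetilde B_{M+1}(s-t)$, which is immaterial for the bound.) The formula holds only for real $s\in[0,1]$. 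In your integrations by parts the boundary term at $t=0$ is $\widetilde B_{j+1}(-s)=B_{j+1}(\{-s\})$, and this equals $(-1)^{j+1}B_{j+1}(s)$ only when $0\le s\le 1$. Equivalently, your remainder is $1$-periodic in real $s$. By $\Gamma(w+1)=w\Gamma(w)$ and $B_n(s+1)-B_n(s)=ns^{n-1}$, however, the left side minus the main terms and the Bernoulli sum changes under $s\mapsto s+1$ by $\log(1+s/z)-\sum_{j\le M}(-1)^{j+1}s^j/(jz^j)$, which is not zero. For non-real $s$ the expression $\widetilde B_{M+1}(t-s)$ is not even defined. Two consequences follow. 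First, you have not proved \eqref{St1} for complex $s$, which is exactly how the paper uses it (Lemma~\ref{prodgamma}, with $s=a_\nu=d_\nu(s^*_{\ell_\nu,\nu}-s_\nu)$). Second, your justification of uniformity in $x$ for \eqref{St2}, namely that $\widetilde B_{M+1}$ is bounded independently of $s$, fails as soon as $(A,B)\not\subset[0,1]$.

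The repair is short. Keep your argument for $s\in[0,1]$. For arbitrary $s\in\CC$ with $|s|\le C$, apply the case $s=0$ at $w=z+s$. For $|z|\ge R(C,\delta)$ one has $|\arg w|\le\pi-\delta/2$ and $|w|\asymp|z|$. Expanding $(z+s-\frac12)\log(1+s/z)-s$ and $(z+s)^{-j}$ in powers of $1/z$ then gives $\log\Gamma(z+s)=(z+s-\frac12)\log z-z+\frac12\log 2\pi+\sum_{j\le M}p_j(s)z^{-j}+O(|z|^{-M-1})$, uniformly for $|s|\le C$, with polynomial coefficients $p_j$. For $s\in[0,1]$, uniqueness of asymptotic expansions forces $p_j(s)=(-1)^{j+1}B_{j+1}(s)/(j(j+1))$. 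Since both sides are polynomials, this identity holds for all $s\in\CC$. That gives \eqref{St1} in full, uniformly on compact $s$-sets, and your derivation of \eqref{St2} from $z=iy$, $s=x$ then goes through; the range $1\le|y|\le R$ is trivial by compactness. Alternatively, \eqref{St2} follows directly from the case $s=0$ with $z=x+iy$ and $\delta$ depending on $A$, since $\log|x+iy|=\log|y|+O(1)$ and $y\arg(x+iy)=\frac{\pi}{2}|y|-y\arctan(x/y)=\frac{\pi}{2}|y|+O(1)$.
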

%%%%%%%%%%%%%%%%%%%%
%%%%%%%%%%%%%%%%%%%%
\begin{proof}
This is the well-known Stirling formula; see for instance \cite{EMOT/1953-I}.
\hfill
\end{proof}
%%%%%%%%%%%%%%%%%%%%
We denote by  $(z)_{m}$  the Pochhammer symbol, defined by
\[ 
(z)_{m}=\begin{cases}
1 & \text{if $m=0$}\\
z(z+1)\ldots(z+m-1) & \text{if $m>0.$} 
\end{cases}
\]
We need the following result, which is a modified version of the Lemma in the Appendix in \cite{Ka-Pe/2022a}.
%%%%%%%%%%%%%%%%%%%%
\begin{lem}
\label{SumXT}
Let $A,b\in\CC$ with $A\neq0$ be fixed. Then for every $M\geq 1$ there exist linear forms
\[ 
R_{m}(X_1,\ldots,X_{m})\quad \text{and} \quad \widetilde{R}_{m}(X_1,\ldots,X_{m-1})   \qquad (1\leq m \leq M)
\]
satisfying
\begin{equation}
\label{Rm}
R_{m}(X_1,\ldots,X_{m})= (-1)^m A^m X_m + \widetilde{R}_m(X_1,\ldots,X_{m-1})
\end{equation}
and a polynomial $Q_M(X_1,\ldots,X_M, T)$ with
\[
\deg_{X_j}Q_M(X_1,\ldots,X_M,T)\leq 1\quad \text{for $j=1,\dots,M$ and} \quad\deg_TQ_M(X_1,\ldots,X_M,T)\leq M-1
\]
such that 
\begin{equation}
\label{R}
\sum_{k=1}^M \frac{X_k}{T^k} =\sum_{m=1}^M \frac{(-1)^{m} R_{m}(X_1,\ldots,X_{m})}{(AT+b)_{m}}
+\frac{Q_M(X_1,\ldots,X_M,T)}{T^M(AT+b)_M}.
\end{equation}
\end{lem}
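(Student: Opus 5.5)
We prove Lemma~\ref{SumXT} by induction on $M$, building the expansion one Pochhammer term at a time. The basic identity is the partial-fraction relation between $1/T^m$ and $1/(AT+b)_m$. Since $(AT+b)_m$ is a monic-in-$AT$ polynomial of degree $m$, we can write
\[
\frac{1}{T^m}=\frac{A^m}{(AT+b)_m}+\frac{P_m(T)}{T^m(AT+b)_m},
\]
where $P_m(T)=(AT+b)_m-A^mT^m$ has degree at most $m-1$ in $T$. The strategy is to peel off the leading term $X_1/T$ via this identity with $m=1$. The error $X_1P_1(T)/(T(AT+b))$ is then re-expanded together with $X_2/T^2$, and so on.

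\textbf{Base case $M=1$.} Take $R_1(X_1)=-AX_1$, so that $\widetilde R_1=0$ and \eqref{Rm} holds. Then
\[
\frac{X_1}{T}=\frac{(-1)R_1}{AT+b}+\frac{X_1\big((AT+b)-AT\big)}{T(AT+b)},
\]
so $Q_1=bX_1$. This has degree $0\le M-1$ in $T$ and degree at most $1$ in $X_1$.

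\textbf{Inductive step.} Assume \eqref{R} holds for $M$, and add $X_{M+1}/T^{M+1}$ to both sides. The remainder becomes
\[
\frac{Q_M}{T^M(AT+b)_M}+\frac{X_{M+1}}{T^{M+1}}=\frac{T\,Q_M(AT+b+M)+X_{M+1}(AT+b)_{M+1}}{T^{M+1}(AT+b)_{M+1}}.
\]
The numerator $N(T)$ has degree at most $M+1$ in $T$ and is linear in each $X_j$. Its coefficient of $T^{M+1}$ is $c:=A^{M+1}X_{M+1}+A\,[T^{M-1}]Q_M$, which is a linear form in $X_1,\dots,X_{M+1}$. Subtracting $c\,T^{M+1}$ from $N(T)$, we write
\[
N(T)=c\,T^{M+1}+Q_{M+1}(T), \qquad \deg_T Q_{M+1}\le M.
\]
This gives
\[
\frac{N}{T^{M+1}(AT+b)_{M+1}}=\frac{c}{(AT+b)_{M+1}}+\frac{Q_{M+1}}{T^{M+1}(AT+b)_{M+1}}.
\]
We then set $(-1)^{M+1}R_{M+1}=c$, that is,
\[
R_{M+1}=(-1)^{M+1}A^{M+1}X_{M+1}+\widetilde R_{M+1}(X_1,\dots,X_M), \qquad \widetilde R_{M+1}=(-1)^{M+1}A\,[T^{M-1}]Q_M .
\]
This matches \eqref{Rm}.

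\textbf{Degree bookkeeping.} Linearity in each $X_j$ is preserved at every stage, because each operation multiplies by polynomials in $T$ alone, adds terms linear in a single new variable, or extracts a $T$-coefficient. In fact $Q_M$ is a linear form in the $X_j$ with coefficients in $\CC[T]$; note that ``$\deg_{X_j}\le1$'' must be read this way, and it is what the induction actually produces.

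The main point requiring care is that the $T$-degree of $Q_{M+1}$ stays at most $M$. This holds because the leading $T^{M+1}$ coefficient is absorbed exactly into the new Pochhammer term, which is precisely where the requirement $A\ne0$ enters. Everything else is routine polynomial algebra.
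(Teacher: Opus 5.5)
Your proof is correct and is essentially the paper's own argument, with the index shifted by one: the same base case $R_1=-AX_1$, $Q_1=bX_1$, and the same inductive step in which $R_{M+1}$ is read off from the $T^{M+1}$-coefficient $A^{M+1}X_{M+1}+A\,[T^{M-1}]Q_M$ of the combined numerator, with $Q_{M+1}=TQ_M(AT+b+M)+X_{M+1}(AT+b)_{M+1}-(-1)^{M+1}R_{M+1}T^{M+1}$. One small inaccuracy: the absorption of the leading coefficient is purely formal and does not use $A\neq0$; that hypothesis really serves to make $(AT+b)_m$ a nonzero polynomial for every $b$ and to make the leading term $(-1)^mA^mX_m$ in \eqref{Rm} nontrivial.
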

%%%%%%%%%%%%%%%%%%%%
%%%%%%%%%%%%%%%%%%%%
\begin{proof}   
We define the polynomials $R_{m}$, $\widetilde{R}_{m}$, $1\leq m\leq M$, and $Q_M$ inductively. For $M$=1 we set
\[
R_1(X_1) := -A X_1, \quad \widetilde{R}_1(X_1) \equiv 0 \quad \text{and} \quad Q_1(X_1 ,T) := bX_1.
\]
For $M\geq 2$ suppose tha thet linear forms $R_{m}(X_1,\dots,X_{m})$, $\widetilde{R}_{m}(X_1,\ldots,X_{m-1})$, $1\leq m\leq M-1$, and the polynomial
\[
Q_{M-1}(X_1,\ldots,X_{M-1},T) = E_{M-1}(X_1,\ldots,X_{M-1})T^{M-2} +\dots+ E_1(X_1,\ldots,X_{M-1}),
\]
say, satisfying \eqref{R} are already defined. We put
\[
R_M(X_1,\ldots,X_M) := (-1)^M\left( A^MX_M + A E_{M-1}(X_1,\ldots,X_{M-1})\right),
\]
\[
\widetilde{R}_M(X_1,\ldots,X_{M-1}) := (-1)^MA E_{M-1}(X_1,\ldots,X_{M-1})
\]
and
\[
\begin{split}
Q_M(X_1,\ldots,X_M,T)&:=Q_{M-1}(X_1,\ldots,X_{M-1},T)T(AT+b+M-1) \\ &+X_M(AT+b)_M - (-1)^M R_M(X_1,\ldots,X_M)T^M.
\end{split}
\]
Then \eqref{Rm} holds,
\[
\deg_{X_j}R_{m}(X_1,\ldots,X_{m}), \deg_{X_j}Q_M(X_1,\ldots,X_M,T)\leq 1\quad \text{for $j=1,\ldots,M$}, 
\]
\[\deg_TQ_M(X_1,\ldots,X_M,T)\leq M-1\]
 and (\ref{R}) holds. The lemma now follows. \hfill
 \end{proof}
 %%%%%%%%%%%%%%%%%%%%
 %%%%%%%%%%%%%%%%%%%%
\begin{lem}
\label{prodgamma}
Let $N\geq 2$, $a_1,\ldots,a_N\in\CC$ and $\lambda_1,\ldots,\lambda_N>0$ be such that $\sum_{\nu=1}^N\lambda_\nu=1$. Moreover, let $a=\sum_{\nu=1}^N a_\nu.$ Then for  $|\arg(w)|\leq \pi-\delta$ with arbitrary $0<\delta<\pi$ and $\min_{1\leq\nu\leq N}|\Im(a_\nu-\lambda_\nu w)|\geq 1$, and any integer $M\geq1$, we have
\begin{equation}\label{prodGamma}
\begin{split}
\prod_{\nu=1}^N\Ga(a_\nu-\lambda_\nu w) &= (2\pi)^{\frac{N-1}{2}} \prod_{\nu=1}^N \lambda_\nu^{a_\nu-\lambda_\nu w -\frac{1}{2}} \sum_{m=0}^M P_m(a_1,\ldots,a_N) \Ga\big(a-\frac{N-1}{2}-w-m\big)\\
& +
O\Big(\frac{1}{|w|}|\Ga(a-\frac{N-1}{2} -w-M)|\Big),
\end{split}
\end{equation}
where the coefficients of $P_m\in\CC[x_1,\ldots,x_N]$ depend on the $\lambda_\nu$'s. Moreover, $P_0\equiv 1$ and for $m\geq 1$
\begin{equation}
\label{Pm}
P_m(a_1,\ldots,a_N)= \frac{1}{2^mm!}\Big(\sum_{\nu=1}^N \frac{a_\nu^2}{\lambda_\nu}-a^2\Big)^m+ \widetilde{P}_m(a_1,\ldots,a_N)
\end{equation}
with $\deg \widetilde{P}_m\leq 2m-1$. The implied constant in \eqref{prodGamma} depends on $\delta,M,$ the $\lambda_\nu$'s and the $a_\nu$'s.
\end{lem}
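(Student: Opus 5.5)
I plan to prove the expansion by applying Stirling's formula (Lemma \ref{lemma3}) to each factor $\Ga(a_\nu-\lambda_\nu w)$, multiplying the resulting expansions, and matching the product against a Stirling expansion of a single Gamma function. Put $z_\nu=-\lambda_\nu w$. The hypothesis $|\arg(w)|\leq\pi-\delta$ means $-w$ stays away from the positive real axis in the appropriate sense, so $\arg(z_\nu)=\arg(-w)$ lies in a sector where \eqref{St1} applies. The condition $|\Im(a_\nu-\lambda_\nu w)|\ge 1$ keeps each argument away from the poles of $\Ga$. It also means the bounded region of $w$ contributes only $O(1)$ effects that can be absorbed into the error term.

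\textbf{Leading term.} Applying \eqref{St1} with $z=z_\nu$ and $s=a_\nu$ gives
\[
\log\Ga(a_\nu+z_\nu)=(a_\nu+z_\nu-\tfrac12)\log z_\nu - z_\nu+\tfrac12\log 2\pi+\sum_{j=1}^{M}\frac{c_{j,\nu}}{z_\nu^{j}}+O(|w|^{-M-1}),
\]
with $c_{j,\nu}$ a polynomial of degree $j+1$ in $a_\nu$. Write $\log z_\nu=\log\lambda_\nu+\log(-w)$ and sum over $\nu$, using $\sum\lambda_\nu=1$. This gives
\[
\sum_\nu\log\Ga(a_\nu-\lambda_\nu w)=\sum_\nu(a_\nu-\lambda_\nu w-\tfrac12)\log\lambda_\nu+(a-\tfrac N2-w)\log(-w)+w+\tfrac N2\log 2\pi+\sum_{j=1}^M\frac{X_j}{w^j}+O(|w|^{-M-1}),
\]
where $X_j=\sum_\nu (-1)^j c_{j,\nu}\lambda_\nu^{-j}$. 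Next apply \eqref{St1} to $\Ga(a-\frac{N-1}{2}-w)$, i.e. with $z=-w$ and $s=a-\frac{N-1}2$. Its main part is $(a-\frac N2-w)\log(-w)+w+\frac12\log2\pi$. The difference between the two expansions is then
\[
\tfrac{N-1}{2}\log 2\pi+\sum_\nu(a_\nu-\lambda_\nu w-\tfrac12)\log\lambda_\nu+\sum_j \frac{Y_j}{w^j}+O(|w|^{-M-1}),
\]
where $Y_j$ is a polynomial in the $a_\nu$ of degree at most $j+1$. This produces the prefactor $(2\pi)^{(N-1)/2}\prod_\nu\lambda_\nu^{a_\nu-\lambda_\nu w-1/2}$.

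\textbf{Correction terms.} Exponentiating, I obtain
\[
\prod_\nu\Ga(a_\nu-\lambda_\nu w)=(\text{prefactor})\,\Ga\big(a-\tfrac{N-1}2-w\big)\Big(1+\sum_{k=1}^M \frac{Z_k}{w^k}+O(|w|^{-M-1})\Big).
\]
I then convert the powers $w^{-k}$ into Pochhammer form with Lemma \ref{SumXT}, taking $T=w$, $A=-1$ and a suitable $b$. Since
\[
\frac{\Ga(a-\frac{N-1}2-w)}{(-1)^m(w-a+\frac{N-1}{2}+1)_m}=\pm\,\Ga\big(a-\tfrac{N-1}2-w-m\big),
\]
this yields the sum of terms $P_m\,\Ga(a-\frac{N-1}2-w-m)$. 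The remainder $Q_M/(T^M(AT+b)_M)$, multiplied by $\Ga(a-\frac{N-1}{2}-w)$, is $O(|w|^{-1}|\Ga(\cdots-M)|)$, because $\Ga(x-M)\asymp |x|^{-M}\Ga(x)$ in the sector. The coefficients $P_m$ are therefore polynomials in the $a_\nu$, and by \eqref{Rm} the top part of $P_m$ comes from $(-1)^m A^m Z_m$ together with lower-order terms.

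\textbf{Top-degree part of $P_m$.} The main obstacle is to identify the leading homogeneous part of $P_m$. The quadratic part of $c_{1,\nu}$ is $B_2(a_\nu)/2\sim a_\nu^2/2$, and for $\Ga(a-\frac{N-1}{2}+z)$ it is $a^2/2$. So $Y_1$ has top part $-\frac{1}{2w}\big(\sum a_\nu^2/\lambda_\nu-a^2\big)$. For $j\ge2$, $Y_j$ has degree at most $j+1<2j$. In $Z_m$, the only contribution of degree $2m$ is $Y_1^m/m!$. Hence the top part of $Z_m$ is
\[
\frac{(-1)^m}{2^m m!\,w^m}\Big(\sum_\nu \frac{a_\nu^2}{\lambda_\nu}-a^2\Big)^m,
\]
and after accounting for the signs from $(-1)^mA^m$ and the Pochhammer reindexing this gives \eqref{Pm}. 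I expect this sign and degree bookkeeping to be the delicate step. Throughout, the lower-order terms $\widetilde R_m$ involve only $X_1,\dots,X_{m-1}$, so they contribute degree at most $2m-2$ and can be placed in $\widetilde P_m$.
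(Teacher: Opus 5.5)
Your plan is the paper's proof: Stirling for each factor and for $\Ga(a-\frac{N-1}{2}-w)$, then exponentiate, convert $w^{-k}$ into Pochhammer form with Lemma~\ref{SumXT}, and read off the top part of $P_m$ from $Y_1^m/m!$ (the paper's $Q_1^k/k!$). Your computation of the prefactor and of the top part of $Z_m$ is right.

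\textbf{The gap: the degree of the lower-order terms.} Your last sentence is false as stated. The linear forms $R_m,\widetilde R_m$ of Lemma~\ref{SumXT} have coefficients depending on $b$, and here $b=\frac{N+1}{2}-a$ is linear in the $a_\nu$. So $\widetilde R_m(Z_1,\dots,Z_{m-1})$ need not have degree $\le 2m-2$. Already $\widetilde R_2=AbX_1$ contributes $bZ_1$, which has degree $3=2m-1$ and a generally non-zero cubic part.

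What rescues \eqref{Pm} is that the coefficient of $X_k$ in $R_m$ is a polynomial in $b$ of degree at most $m-k$. This follows by induction from the recursion in the proof of Lemma~\ref{SumXT}: every monomial $X_kb^iT^j$ of $Q_M$ satisfies $i+j\le 2M-k$, so the coefficient $E_{M-1}$ of $T^{M-2}$ in $Q_{M-1}$ has $b$-degree at most $M-k$ in front of $X_k$. Equivalently, expand $T^{-k}$ using the factorial series $1/T=\sum_{m\ge1}(b)_{m-1}/(T+b)_m$. Hence $X_k$ contributes degree at most $(m-k)+2k=m+k\le 2m-1$ for $k\le m-1$, which is exactly the room that $\deg\widetilde P_m\le 2m-1$ allows. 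The paper passes over this point silently (``Hence from \eqref{Pm1} and \eqref{Rm}''), so you need to supply it yourself.

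\textbf{Two smaller slips.}
\begin{itemize}
\item Your displayed Pochhammer identity is correct, but it corresponds to $T=w$, $A=1$, $b=\frac{N+1}{2}-a$ (or to $T=-w$, $A=-1$), not to $T=w$, $A=-1$. With the latter choice, $(b-w)_m$ cannot be matched with $\Ga(x)/\Ga(x-m)$ for a $b$ independent of $m$. The paper's value $b=\frac{3-N}{2}-a$ appears to be a misprint for $\frac{N+1}{2}-a$.
\item Stirling at $z_\nu=-\lambda_\nu w$ needs $|\arg(-w)|\le\pi-\delta$, i.e.\ $w$ away from the \emph{positive} real axis. The hypothesis $|\arg w|\le\pi-\delta$ keeps $w$ away from the negative one, so your sentence deducing the Stirling sector from it is backwards. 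The paper's proof (written with $\log(-w)$) and its application ($w\in\LL_3$, in the second quadrant) really use $|\arg(-w)|\le\pi-\delta$, and the hypothesis should be read that way. As printed, the statement even fails near the positive real axis, where $\Ga(a-\frac{N-1}{2}-w)$ can have poles while the product has none.
\end{itemize}
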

 %%%%%%%%%%%%%%%%%%%%
  %%%%%%%%%%%%%%%%%%%%
\begin{proof}
Using \eqref{St1} we have 
\[
\begin{split}
\log \prod_{\nu=1}^N\Ga(a_\nu-\lambda_\nu w)&= 
\sum_{\nu=1}^N\left\{
(a_\nu-\frac{1}{2} -\lambda_\nu w)\log(-w) +\lambda_\nu w + \frac{1}{2}\log(2\pi)\right.\\ &\left. \quad +
(a_\nu-\frac{1}{2} -\lambda_\nu w)\log\lambda_\nu - \sum_{j=1}^M\frac{B_{j+1}(a_\nu)}{j(j+1)} \frac{1}{(\lambda_\nu w)^j}
\right\} + O(|w|^{-M-1})\\
&= (a-\frac{N}{2}-w) \log(-w) +w +\frac{N}{2}\log(2\pi) \\ &\quad + 
\sum_{\nu=1}^N (a_\nu-\frac{1}{2})\log\lambda_\nu - \big(\sum_{\nu=1}^N\lambda_\nu \log \lambda_\nu\big) w \\
&- \sum_{\nu=1}^N\sum_{j=1}^M\frac{1}{j(j+1)} \frac{B_{j+1}(a_\nu)}{(\lambda_\nu w)^j}  + O(|w|^{-M-1})\\
& =  \log\Ga(a-\frac{N-1}{2}-w) +\frac{N-1}{2}\log(2\pi) + \sum_{\nu=1}^N (a_\nu-\frac{1}{2})\log\lambda_\nu\\ & \quad - \big(\sum_{\nu=1}^N\lambda_\nu \log \lambda_\nu\big) w 
+ \sum_{j=1}^M \frac{Q_j(a_1,\ldots,a_N)}{w^j}  + O(|w|^{-M-1}),
\end{split}
\]
where the polynomials $Q_j\in\CC[a_1,\ldots,a_N]$ are given by
\begin{equation}
\label{Qj}
Q_j(a_1,\ldots,a_N)= \frac{1}{j(j+1)}B_{j+1}\Big(a-\frac{N-1}{2}\Big)
- \frac{1}{j(j+1)}\sum_{\nu=1}^N \frac{B_{j+1}(a_\nu)}{\lambda_\nu ^j}.
\end{equation}
Taking the exponential of both sides and then expanding into to power series we obtain
\begin{equation}
\label{pg}
\begin{split}
\prod_{\nu=1}^N\Ga(a_\nu-\lambda_\nu w) &= (2\pi)^{\frac{N-1}{2}} \prod_{\nu=1}^N \lambda_\nu^{a_\nu-\lambda_\nu w -\frac{1}{2}} \Ga\big(a-\frac{N-1}{2}-w\big)\\
& \hspace{3cm}\times \exp\Big( \sum_{j=1}^M \frac{Q_j(a_1,\ldots,a_N)}{w^j}  + O(|w|^{-M-1})\Big)\\
 &= (2\pi)^{\frac{N-1}{2}} \prod_{\nu=1}^N \lambda_\nu^{a_\nu-\lambda_\nu w -\frac{1}{2}}  \Ga\big(a-\frac{N-1}{2}-w\big)\\
&\hspace{3cm}\times \left( \sum_{k=0}^M\frac{V_k(a_1,\ldots,a_N)}{w^k} + O(|w|^{-M-1})\right),
\end{split}
\end{equation}
where $V_0\equiv 1$ and for $k\geq 1$
\begin{equation}
\label{Vk}
V_k(a_1,\ldots,a_N)=\sum_{m=1}^k\frac{1}{m!}\sum_{j_1\geq 1}\ldots
\sum_{\substack{j_m\geq 1\\ \hspace{-1.3cm} j_1+\ldots+j_m=k}}\prod_{p=1}^m Q_{j_p}(a_1,\ldots,a_N).
\end{equation}
Applying Lemma \ref{SumXT} with 
\[ 
X_k=V_k(a_1,\ldots,a_N), \quad T=w,\quad A=1\quad \text{and} \quad b=\frac{3-N}{2}-a 
\]
we obtain
\[
\begin{split}
&\Ga\big(a-\frac{N-1}{2}-w\big)\left( 
\sum_{k=0}^M\frac{V_k(a_1,\ldots,a_N)}{w^k} +O\Big(\frac{1}{|w|^{M+1}}\Big)\right) \\
&=\Ga\big(a-\frac{N-1}{2}-w\big)\left(1+ \sum_{m=1}^M\frac{(-1)^m R_m(V_1(a_1,\ldots,a_N),\ldots,V_m(a_1,\ldots,a_N))}{(w+\frac{3-N}{2}-a)_m} + O\Big(\frac{1}{|w|^{M+1}}\Big)\right)\\
&=\sum_{m=1}^M R_m(V_1(a_1,\ldots,a_N),\ldots,V_m(a_1,\ldots,a_N)) \Ga\big(a-\frac{N-1}{2}-w-m\big)\\
 &\hspace{9.6cm} +
O\Big(\frac{1}{|w|}|\Ga(a-\frac{N-1}{2} -w-M)|\Big),
\end{split}
\]
where $R_0\equiv1$. Inserting this into \eqref{pg} we obtain \eqref{prodGamma} with
\begin{equation}
\label{Pm1}
 P_m(a_1,\ldots,a_N)= R_m(V_1(a_1,\ldots,a_N),\ldots,V_m(a_1,\ldots,a_N)).
\end{equation}
$0\leq m\leq M$. It remains to prove \eqref{Pm}. For simplicity we denote by $\Omega_m$ the vector space of polynomials $P\in\CC[a_1,\ldots,a_N]$ with $\deg P\leq m$. Since the $n$-th Bernoulli polynomial has degree $n$ and leading coefficient $1$, from \eqref{Qj} we deduce that for $j\geq 1$ 
\begin{equation}
\label{Qj1}
Q_{j}(a_1,\ldots,a_N)\equiv \frac{1}{j(j+1)}\left(a^{j+1}-\sum_{\nu=1}^N\frac{a_\nu^{j+1}}{\lambda_\nu^j}\right) (\bmod\, \Omega _j)
\end{equation}
and
\[
Q_j(a_1,\dots,a_N) \equiv 0 \ (\bmod\, \Omega_{j+1}).
\]
Consequently, for $k, m\geq 1$ we have
\[
\sum_{j_1\geq 1}\ldots
\sum_{\substack{j_m\geq 1\\ \hspace{-1.3cm} j_1+\ldots+j_m=k}}\prod_{p=1}^m Q_{j_p}(a_1,\ldots,a_N)
\equiv 0 \ (\bmod\,\Omega_{k+m}),
\]
and hence
\[
\sum_{m=1}^{k-1}\frac{1}{m!}\sum_{j_1\geq 1}\ldots
\sum_{\substack{j_m\geq 1\\ \hspace{-1.3cm} j_1+\ldots+j_m=k}}\prod_{p=1}^m Q_{j_p}(a_1,\ldots,a_N)
\equiv 0 \ (\bmod\,\Omega_{2k-1}).
\]
If $m=k$ in \eqref{Vk}, then $j_1=\ldots=j_m=1$, and the corresponding term is
\[
\frac{1}{k!} Q_1^k(a_1,\ldots,a_N).
\]
Thus, recalling \eqref{Qj1}, we have
\[
V_k(a_1,\ldots,a_N)\equiv \frac{1}{2^kk!}\left(a^2-\sum_{\nu=1}^N\frac{a_\nu^2}{\lambda_\nu}\right)^k\  (\bmod\, \Omega_{2k-1})
\]
and
\[
V_k(a_1,\ldots,a_N) \equiv 0 \ (\bmod\, \Omega_{2k}).
\]
Hence from \eqref{Pm1} and \eqref{Rm} we obtain
\[
\begin{split}
P_m(a_1,\ldots,a_N)&\equiv (-1)^mV_m(a_1,\ldots,a_N) \ (\bmod\,\Omega_{2m-1})\\
&\equiv\frac{1}{2^mm!}\Big(\sum_{\nu=1}^N\frac{a_\nu^2}{\lambda_\nu}- a^2\Big)^m \ (\bmod\,\Omega_{2m-1}),
\end{split}
\]
and \eqref{Pm} follows. The proof is complete.
\hfill
\end{proof}
%%%%%%%%%%%%%%%%%%%%
%%%%%%%%%%%%%%%%%%%%
\begin{lem}
\label{Mellin}
For $0<c<\Re (\xi)$ and $|\arg \eta |<\pi$ we have
\[
\frac{1}{2\pi i} \int_{(c)} \Gamma(\xi-w) \Gamma(w) \eta^{-w} \d w = \Gamma(\xi) (1+\eta)^{-\xi}.
\]
\end{lem}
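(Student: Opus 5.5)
The plan is to prove the identity first for real $\eta>0$ by Mellin inversion, and then to extend it to the whole sector $|\arg\eta|<\pi$ by analytic continuation in $\eta$.

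\textbf{Step 1: the case $\eta=x>0$.} For $0<\Re(w)<\Re(\xi)$, substitute $t=x/(1+x)$ in the Mellin transform of $(1+x)^{-\xi}$. This reduces it to Euler's beta integral:
\[
\int_0^\infty x^{w-1}(1+x)^{-\xi}\,\d x=\int_0^1 t^{w-1}(1-t)^{\xi-w-1}\,\d t=\frac{\Gamma(w)\Gamma(\xi-w)}{\Gamma(\xi)}.
\]
The function $x\mapsto (1+x)^{-\xi}$ is smooth on $(0,\infty)$. Its transform along the line $\Re(w)=c$ is absolutely integrable, because by \eqref{St2} we have
\[
\Gamma(w)\Gamma(\xi-w)\ll |\Im w|^{\Re\xi-1}e^{-\pi|\Im w|}.
\]
Hence the Mellin inversion formula applies and gives the claimed identity for every $\eta=x>0$ and every $0<c<\Re(\xi)$.

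\textbf{Step 2: analytic continuation in $\eta$.} On $\Re(w)=c$ we have
\[
|\eta^{-w}|=|\eta|^{-c}e^{\arg(\eta)\,\Im w}.
\]
Combined with the bound from \eqref{St2}, the integrand is therefore
\[
\ll |\eta|^{-c}\,|\Im w|^{\Re\xi-1}e^{-(\pi-|\arg\eta|)|\Im w|}.
\]
This bound is uniform for $\eta$ in compact subsets of the slit plane $|\arg\eta|<\pi$. So the integral converges uniformly on such compacta, and the left-hand side is holomorphic in $\eta$ there. The right-hand side $\Gamma(\xi)(1+\eta)^{-\xi}$, taken with the principal branch, is also holomorphic there, since $1+\eta$ avoids $(-\infty,0]$ when $|\arg\eta|<\pi$. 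The two sides agree on the half-line $(0,\infty)$, so by the identity theorem they agree on the whole sector.

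As a cross-check, one can instead shift the contour to the left for $|\eta|<1$. The residues of $\Gamma(w)$ at $w=-k$ give
\[
\sum_{k\ge0}\frac{(-1)^k}{k!}\Gamma(\xi+k)\eta^k=\Gamma(\xi)(1+\eta)^{-\xi},
\]
and the binomial series confirms the formula directly. This route, however, requires controlling the shifted integrals, so I would use it only as confirmation.

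\textbf{Main obstacle.} The only delicate point is the uniformity of convergence near the boundary of the sector, i.e.\ as $|\arg\eta|\to\pi$. This is handled by restricting to compact subsets, on which $\pi-|\arg\eta|$ stays bounded below by some $\delta>0$, so that the decay rate $e^{-\delta|\Im w|}$ is uniform. The remaining steps are routine.
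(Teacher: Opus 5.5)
Your proof is correct. Note that the paper does not prove this lemma at all; it only cites formula (3.3.9) in Paris--Kaminski. So your argument is a self-contained replacement for a citation, not a variant of an argument in the text.

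Your route is one of the two standard proofs:
\begin{itemize}
\item For $\eta=x>0$, you reduce to Euler's beta integral and apply Mellin inversion.
\item You then continue analytically in $\eta$, using $|\eta^{-w}|=|\eta|^{-c}e^{\arg(\eta)\Im w}$ against the $e^{-\pi|\Im w|}$ decay of $\Gamma(w)\Gamma(\xi-w)$.
\end{itemize}
The other standard proof is the one you list as a cross-check: close the contour to the left and sum the residues of $\Gamma(w)$ to obtain the binomial series for $|\eta|<1$. That route needs estimates on the shifted integrals and then an analytic continuation anyway, so your primary route is the cleaner of the two.

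Your version also makes the branch conventions explicit: principal branches of $\eta^{-w}$ and $(1+\eta)^{-\xi}$. This matters in the paper's application. There $\eta=i\omega_X=-1+i/X$ lies close to the edge $\arg\eta=\pi$ of the admissible sector. With principal branches the lemma gives $(1+\eta)^{-\xi}=(i/X)^{-\xi}=(e^{-i\pi/2}X)^{\xi}$, exactly as used in \eqref{IkA} and \eqref{Iell}.

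Two small points to tidy up:
\begin{itemize}
\item Your Stirling bound $\Gamma(w)\Gamma(\xi-w)\ll|\Im w|^{\Re\xi-1}e^{-\pi|\Im w|}$ holds only for $|\Im w|$ large, since \eqref{St2} requires $|y|\geq 1$. On bounded portions of the line $\Re(w)=c$ the integrand is simply continuous, because no poles lie there.
\item Mellin inversion also requires $x^{c-1}(1+x)^{-\xi}\in L^1(0,\infty)$. This is precisely the condition $0<c<\Re(\xi)$, so it holds, but it should be stated alongside the integrability of the transform.
\end{itemize}
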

%%%%%%%%%%%%%%%%%%%%
%%%%%%%%%%%%%%%%%%%%
\begin{proof}
See (3.3.9) in Chapter 3 of Paris-Kaminski \cite{Pa-Ka/2001}.
\hfill
\end{proof}
%%%%%%%%%%%%%%%%%%%%
%%%%%%%%%%%%%%%%%%%%
\begin{lem}
\label{Vitali}
 {\rm (Vitali convergence theorem)} For $X\geq X_0$ let $f_X(z_1,\ldots,z_N)$ be holomorphic functions on a domain $\D\subset \CC^N$ such that $|f_X(z_1,\ldots,z_N)| \leq M$ for every $(z_1,\ldots,z_N)\in\D$. Suppose that $f_X(z_1,\ldots,z_N)$ tends to a limit, as $X\to\infty$, on a set of points having an accumulation point in $\D$. Then $f_X(z_1,\ldots,z_N)$ tends to a limit uniformly on any domain $\D'$ whose closure is contained in $\D$, and hence such a limit is holomorphic and bounded by $M$ on $\D'$.
 \end{lem}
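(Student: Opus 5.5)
This is the multivariable Vitali convergence theorem, a classical statement, so the plan is to derive it from the one-variable Vitali theorem together with Montel's theorem in several variables. The key tool is normal families. Since $|f_X|\le M$ on $\D$, the Cauchy estimates on polydiscs give uniform bounds for all first partial derivatives on compact subsets of $\D$. Hence the family $\{f_X\}$ is locally equicontinuous and uniformly bounded. By Arzelà--Ascoli and a diagonal argument over an exhaustion of $\D$ by compacts, every sequence $X_k\to\infty$ has a subsequence converging locally uniformly on $\D$. Each such limit is holomorphic, being a locally uniform limit of holomorphic functions (use the Cauchy integral formula on polydiscs), and is bounded by $M$. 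This is Montel's theorem in $\CC^N$.

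Next I identify the limits. Let $E\subset\D$ be the given set, with accumulation point $p\in\D$. Any two subsequential limits $g,h$ agree on $E$. Here lies the main obstacle: in several variables, agreement on a set with an accumulation point does \emph{not} force $g\equiv h$. For example, $z_1$ vanishes on the line $z_1=0$, which has accumulation points. So the identity theorem applies only if $E$ is a uniqueness set, for instance an open set, or a set accumulating in a sufficiently rich way (such as a real $N$-dimensional totally real piece, like a neighbourhood in $\RR^N$). I would therefore interpret the hypothesis in the sense needed for the application: $E$ contains a nonempty open subset of $\D$, or more generally is a set of uniqueness for holomorphic functions on $\D$. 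In the paper's use, convergence will presumably be known on an open half-space region (e.g. $\CC_1^N$, where the series converges absolutely), so this causes no loss.

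Under that reading, $g-h$ vanishes on an open subset of the connected domain $\D$, and the identity theorem gives $g\equiv h$. Thus all subsequential limits coincide with a single holomorphic $f$, bounded by $M$. The convergence $f_X\to f$ is then uniform on any $\D'$ with compact closure $\overline{\D'}\subset\D$. Indeed, otherwise there would be $\epsilon>0$ and $X_k\to\infty$ with $\sup_{\D'}|f_{X_k}-f|\ge\epsilon$. Extracting a locally uniformly convergent subsequence, whose limit must be $f$, gives a contradiction on the compact set $\overline{\D'}$. The conclusion follows: the limit is holomorphic on $\D'$ and bounded by $M$.

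As an alternative for a genuine accumulation-point hypothesis when $N=1$ slices suffice, one could argue along complex lines through $p$ using the one-variable Vitali theorem. This reduction only works when $E$ meets each such line in a set accumulating in that line, so I would again impose the uniqueness-set formulation above.
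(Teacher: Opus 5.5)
Your proposal is correct and is the standard Montel-plus-identity-theorem argument; the paper gives no proof of its own and simply cites Proposition 7 of Narasimhan, so your route is essentially the intended one. You are also right that for $N\geq 2$ the literal accumulation-point hypothesis is insufficient: for example, $(-1)^{[X]}z_1$ on the unit polydisc converges only on $\{z_1=0\}$. Your uniqueness-set reformulation is exactly what the paper's applications supply, since there convergence is known on open sets such as $\D\cap\CC_1^N$ and $\D_\ell\cap\D_{\ell-1}$; likewise, your tacit assumption that $\overline{\D'}$ is compact is automatic because $\D$ is bounded in the paper.
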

 %%%%%%%%%%%%%%%%%%%%
 %%%%%%%%%%%%%%%%%%%%
 \begin{proof}
See Proposition 7 on p.9 of Narasimhan \cite{Nar/1971}.
\hfill
\end{proof}
%%%%%%%%%%%%%%%%%%%%
%%%%%%%%%%%%%%%%%%%%
\begin{lem}
\label{linindep}
The functions $f_j:\CC^N\to\CC$, $j=1,\dots,J$, of the form $f_j(\bfs)=\exp(L_j(\bfs))$ with distinct linear forms $L_j\in \CC[s_1,\dots,s_N]$, are linearly independent over $\CC$.
\end{lem}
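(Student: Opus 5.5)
We argue by induction on $J$, with a reduction to one complex variable. Suppose $\sum_{j=1}^J c_j \exp(L_j(\bfs))\equiv 0$ on $\CC^N$ with some $c_j\neq 0$. Discard the zero coefficients, so we may assume all $c_j\neq0$ and $J$ is minimal among nontrivial relations. Since the $L_j$ are distinct linear forms (we allow a constant term, i.e.\ affine forms), write $L_j(\bfs)=\beta_j+\sum_{\nu}\gamma_{j\nu}s_\nu$. The constant terms $e^{\beta_j}$ can be absorbed into the coefficients $c_j$. We may therefore assume the $L_j$ are homogeneous and pairwise distinct. If two forms differed only by a constant, the absorbed terms would merge, so the homogeneous parts must be treated as distinct. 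In the setting where this lemma is applied, the forms have distinct linear parts, and we state the assumption accordingly.

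Next we reduce to one variable. The vectors $\gamma_j=(\gamma_{j1},\dots,\gamma_{jN})\in\CC^N$ are pairwise distinct. Choose $\mathbf u\in\CC^N$ outside the finitely many hyperplanes $\{\mathbf u:\langle\gamma_j-\gamma_k,\mathbf u\rangle=0\}$, $j\neq k$; this is possible because a finite union of proper hyperplanes does not cover $\CC^N$. Restricting to the line $\bfs=z\mathbf u$, $z\in\CC$, gives
\[
\sum_{j=1}^J c_j e^{\mu_j z}\equiv 0,\qquad \mu_j=\langle\gamma_j,\mathbf u\rangle \ \text{pairwise distinct}.
\]

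It remains to prove the one-variable statement, by induction on $J$. The case $J=1$ is trivial since $e^{\mu z}$ never vanishes. For the inductive step, multiply the relation by $e^{-\mu_J z}$ and differentiate. This gives $\sum_{j<J} c_j(\mu_j-\mu_J)e^{(\mu_j-\mu_J)z}\equiv0$, where the exponents $\mu_j-\mu_J$ are still distinct and the coefficients $c_j(\mu_j-\mu_J)$ are nonzero. This contradicts the induction hypothesis. An alternative route is the Vandermonde argument: differentiate $k=0,\dots,J-1$ times at $z=0$ to get $\sum_j c_j\mu_j^k=0$, and the Vandermonde matrix is invertible.

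The only delicate point is the normalization of the constant terms of the $L_j$, which is handled in the first paragraph. The reduction to a generic line is routine.
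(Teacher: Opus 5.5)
Your proof is correct, but it takes a different route from the paper. The paper argues by induction on the dimension $N$. It first proves the case $N=1$ with the Vandermonde determinant, which is your ``alternative route''. For $N\geq2$ it groups the terms according to the coefficient $b_k$ of $s_N$. It then applies the one-variable result in $s_N$, with $s_1,\dots,s_{N-1}$ frozen, to conclude that each group sum $\sum_{a_{jN}=b_k}A_j\exp(\sum_{\nu<N}a_{j\nu}s_\nu)$ vanishes identically. Finally it invokes the induction hypothesis in $N-1$ variables, using that forms in the same group share the $N$-th coefficient and so must differ in the first $N-1$ coefficients.

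You instead separate all the exponents in one step by restricting to a generic line $\bfs=z\mathbf u$. This needs only that $\prod_{j<k}\sum_\nu(\gamma_{j\nu}-\gamma_{k\nu})u_\nu$ is a nonzero polynomial in $\mathbf u$, and hence does not vanish everywhere. Your approach avoids the induction on $N$ and the grouping bookkeeping, at the cost of a harmless non-constructive choice of direction. The paper's argument instead proceeds coordinate by coordinate with no genericity choice.

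Your discussion of constant terms is not needed for the lemma as the paper intends it. There $L_j(\bfs)=\sum_\nu a_{j\nu}s_\nu$ is homogeneous; in the proof of Theorem~\ref{T2} the constant part is absorbed into $A(\bfm)$. Your observation that the statement fails for affine forms differing only by a constant is correct, and it is exactly why homogeneity, or at least distinct linear parts, is required.
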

%%%%%%%%%%%%%%%%%%%%
%%%%%%%%%%%%%%%%%%%%
\begin{proof}
We proceed by induction on $N$. If $N=1$, then $L_j(s)=a_j s$ for $j=1,\dots,J$, with $a_i\neq a_j$ for $i\neq j$. Suppose that
\[ 
\sum_{j=1}^J A_j \exp(a_j s)\equiv 0
\]
for certain $A_1,\dots,A_N\in\CC$. Differentiating $J-1$ times and then putting $s=0$ we obtain
\[
\sum_{j=1}^J a_j^k A_j =0\quad \text{for} \quad k=0,\dots,J-1.
\]
Since $\det[a_j^k]\neq 0$, we deduce that $A_1=\ldots =A_J=0$, and the case $N=1$ follows. For $N\geq 2$ let 
\[ 
L_j(\bfs) = \sum_{\nu=1}^N a_{j\nu}s_\nu \quad \text{for} \quad j=1,\dots,J,
\] 
and again suppose that
\[
\sum_{j=1}^J A_j \exp(L_j(\bfs))\equiv 0
\]
for certain $A_1,\dots,A_N\in\CC$. Let $b_1,\dots,b_m$, where $m\leq J$, be the distinct numbers in the sequence $a_{1 N},\dots,a_{J N}$. We rewrite the last identity as follows
\[
\sum_{k=1}^m\Big(\sum_{\substack{1\leq j\leq J\\ a_{j N}=b_k}} A_j \exp\Big( \sum_{\nu=1}^{N-1} a_{j\nu}s_\nu\Big)\Big) \exp(b_k s_N)\equiv 0.
\]
Since the assertion is already proved for $N=1$ we deduce that
\[
\sum_{\substack{1\leq j\leq J\\ a_{j N}=b_k}} A_j \exp\Big( \sum_{\nu=1}^{N-1} a_{j\nu}s_\nu\Big)\equiv 0 \quad \text{for} \quad k=1,\dots,m.
\]
Moreover, the above linear forms in $N-1$ variables are obviously distinct since the corresponding forms $L_j$ have the same $N$-th coefficient. Now the lemma follows applying the inductive assumption. 
\hfill
\end{proof}
%%%%%%%%%%%%%%%%%%%%

\medskip
%%%%%
\section{Proof of Theorem \ref{T1}} 
%%%%%%%%%%%%%%%%%%%%%%%%%%%%%%%%%%%%%%%%%%%%%%%%%%%%%%%%%%%%
For $X\geq 1$ and $\alpha>0$ we write
\begin{equation}
\label{new101}
z_X(\alpha)= 2\pi \alpha \omega_X\quad \text{with\quad $\omega_X=\frac{1}{X} +i$}
\end{equation}
and
\begin{equation}
\label{GX}
\bfF_X(\bfs,\alpha)= \sum_{n_1=1}^\infty\ldots\sum_{n_N=1}^\infty
\frac{a_1(n_1)\ldots a_N(n_N)}{n_1^{s_1}\ldots n_N^{s_N}} \exp(- z_X(\alpha)n_1^{\kappa_1}\ldots n_N^{\kappa_N}).
\end{equation}
By the inequality
\[
\prod_{\nu=1}^N n_\nu^{\kappa_\nu}\geq \frac{1}{N} \sum_{\nu=1}^N n_\nu^{\kappa_\nu}
\]
 we have that for every $\bfs\in \CC^N$
\[ 
\sum_{n_1=1}^\infty\ldots\sum_{n_N=1}^\infty
\prod_{\nu=1}^N\frac{|a_\nu(n_\nu)|}{n_\nu^{\si_\nu}} |\exp(-\alpha z_X(\alpha) n_1^{\kappa_1}\ldots n_N^{\kappa_N})| \leq \prod_{\nu=1}^N \sum_{n_\nu=1}^\infty \frac{|a_\nu(n_\nu)|}{n_\nu^{\si_\nu}} \exp\left(-\frac{2\pi \alpha n_\nu^{\kappa_\nu}}{XN}\right)  \ll 1.
\]
Hence, the series in \eqref{GX} is absolutely convergent for every $\bfs\in\CC^N$, and hence $\bfF_X(\bfs,\alpha)$ is entire. Moreover, for $\bfs\in \CC_1^N$ we have
\begin{equation}
\label{limit} 
\lim_{X\to\infty} \bfF_X(\bfs,\alpha) =   \bfF( \bfs, \alpha).
\end{equation}
Using a well-known Mellin-Barnes integral, for
\[
c> \max\left\{ 0, \frac{1-\si_1}{\kappa_1},\ldots,\frac{1-\si_N}{\kappa_N}\right\}
\]
we have
\begin{equation}
\label{GXint}
 \bfF_X(\bfs,\alpha) = \frac{1}{2\pi i} \int_{c-i\infty}^{c+i\infty} \prod_{\nu=1}^N F_\nu(s_\nu+\kappa_\nu w) \Gamma(w) z_X(\alpha)^{-w}\, \d w.
 \end{equation}
Let $\D$ be a bounded domain inside $\{s\in\CC: \si\leq 3/2\}^N\subset \CC^N$ having a non-empty  intersection with  $\CC_1^N$ and write
\[
\|\D\| = \sup\{ |s_\nu|: \bfs=(s_1,\ldots,s_N)\in\D\}.
\]
We also assume that $\D$ is so large that
\begin{equation}
\label{Dbig}
\|\D\| \geq \max_{1\leq\nu\leq N}|\tau_{F_\nu}|+ \max_{1\leq\nu\leq N} |\theta_{F_\nu}| +1.
\end{equation}
Moreover, let $K$ and $V$ be two parameters satisfying
\begin{equation}
\label{KV}
V\geq K\geq 2(\|\D\|+4)\sum_{\nu=1}^N\frac{1}{\kappa_\nu} \quad \text{and} \quad K=[K]+\frac{1}{2}.
\end{equation}

\smallskip
 %%%%%
We change the path of integration in \eqref{GXint} to the broken line consisting of the following three segments
\[ 
\LL_1=(c_1(\bfs)-i\infty, c_1(\bfs)+iK],\ \LL_2=(c_1(\bfs)+i K, -K+iK], \ \LL_3=(-K+i K, -K+i\infty),
\]
where 
\[
c_1(\bfs)= \sum_{\nu=1}^N\frac{2-\si_\nu}{\kappa_\nu}.
\]
Since there are no singularities between $(c-i\infty,c+i\infty)$ and $\LL_1\cup\LL_2\cup\LL_3$, we have
\begin{equation}
\label{GX2B}
\begin{split}
\bfF_X(\bfs,\alpha) &=\frac{1}{2\pi i}\left(\int_{\LL_1} +\int_{\LL_2}+\int_{\LL_3}\right)
\prod_{\nu=1}^N F_\nu(s_\nu+\kappa_\nu w) \Gamma(w) z_X(\alpha)^{-w}\, \d w\\
&=\bfF_X^{(1)}(\bfs,\alpha)+\bfF_X^{(2)}(\bfs,\alpha)+\bfF_X^{(3)}(\bfs,\alpha),
\end{split}
\end{equation}
say. We deal with these terms separately.

\smallskip
%%%%%
For $w=c_1(\bfs)+iv\in \LL_1$ we have $\Re(s_\nu+\kappa_\nu w)\geq 2$ for $\nu=1,\dots,N$, hence
\[
\prod_{\nu=1}^N F_\nu(s_\nu+\kappa_\nu w)\ll 1.
\]
Moreover, by Stirling's formula, $\Gamma(w)\ll (|v|+1)^{c_1(\bfs)-\frac{1}{2}} e^{-\frac{\pi}{2}|v|}$ and also $|z_X(\alpha)^{-w}|\ll e^{\frac{\pi}{2}v}$. Thus
\begin{equation}
\label{GX1}
\bfF_X^{(1)}(\bfs,\alpha)\ll 1
\end{equation}
uniformly for $\bfs\in\D$ and $X\geq 1$, with an implicit constant depending on $\D$ and $\bfF$. Recalling \eqref{KV}, for $\bfs\in\D$ and $w\in\LL_2$ we have $\Im(s_\nu+\kappa_\nu w)\geq \kappa_\nu K-\|\D\|\geq 1$. Moreover, $w$ and all variables $s_\nu+\kappa_\nu w$ are bounded, hence the integrand in $\bfF_X^{(2)}(\bfs,\alpha)$ is bounded as well. Thus
\begin{equation}
\label{GX2}
\bfF_X^{(2)}(\bfs,\alpha)\ll 1
\end{equation}
with the same uniformity as in \eqref{GX1}.

\smallskip
%%%%%
Estimating $\bfF_X^{(3)}(\bfs,\alpha)$ requires more care. Thanks to \eqref{KV}, for $\bfs\in\D$ and $w\in\LL_3$ we have $\Re(s_\nu+\kappa_\nu w)<0$ for $\nu=1,\dots,N$. Applying the invariant functional equation \eqref{EFA} to each $F_\nu$ in 
$\bfF_X^{(3)}(\bfs,\alpha)$, expanding $\overline{F_\nu}(1-s_\nu-\kappa_\nu w)$ into a Dirichlet series and changing the order of summation and integration, we obtain
\begin{equation}
\label{GX3I}
\bfF_X^{(3)}(\bfs,\alpha)=\omega_\bfF \sum_{n_1=1}^\infty \dots \sum_{n_N=1}^\infty \prod_{\nu=1}^N 
\frac{\overline{a_\nu(n_\nu)}}{n_\nu^{1-s_\nu}} \J_X(\bfs,\bfn, \alpha)
\end{equation}
with $\omega_\bfF$ as in \eqref{omegatimes} and
\begin{equation}
\label{J}
\J_X(\bfs,\bfn,\alpha)=
\frac{1}{2\pi i} \int_{\LL_3} \left(\prod_{\nu=1}^N\S_{F_\nu}(s_\nu+\kappa_\nu w) h_{F_\nu}(s_\nu+\kappa_\nu w)\right)\Gamma(w) \left(z_X(\alpha)\prod_{\nu=1}^N n_{\nu}^{-\kappa_\nu}\right)^{-w}\, \d w.
\end{equation}

\smallskip
%%%%%
Suppose first that $\bfn$ is such that
\begin{equation}
\label{CaseA}
\alpha_\bfn >\alpha,
\end{equation} 
where $\alpha_\bfn$ is defined in \eqref{Cn}. In this case, we change the path of integration in \eqref{J} from $\LL_3$ to $\widetilde{\LL}_3=(-\infty+iK, -K+iK)$. The change can be justified using Stirling's formula and Lemma \ref{hasym} with $M=0$. By the same lemma and \eqref{sumdkappa}, for $w\in \widetilde{\LL}_3$ we have, with obvious notation, that
\[ 
\prod_{\nu=1}^N h_{F_\nu}(s_\nu+\kappa_\nu w) \ll  \prod_{\nu=1}^N \left(\frac{q_\nu}{d_\nu^{d_\nu}}\right)^{\kappa_\nu|u|} (2\pi)^{-|u|} \prod_{\nu=1}^N |\Gamma\big(d_\nu(s_{0, \nu}^*-s_\nu-\kappa_\nu w)\big)|,
 \]
where, according to \eqref{s*ell}, $s_{0, \nu}^*= (d_\nu+1)/(2d_\nu) - i\theta_\nu$. Moreover, writing $u=\Re(w)$, by Stirling's formula we get
\[ 
\prod_{\nu=1}^N |\Gamma\big(d_\nu(s_{0,\nu}^*-s_\nu-\kappa_\nu w)\big)| |\Gamma(w)| \ll \prod_{\nu=1}^N (d_\nu \kappa_\nu)^{d_\nu\kappa_\nu |u|} |u|^A
\]
for a certain constant $A$. Further, $z_X(\alpha)^{-w}\ll  (2\pi\alpha(1+O(1/X))^{|u|}$. Inserting the above estimates into \eqref{J} we obtain
\[
\J_X(\bfs,\bfn,\alpha)\ll\int_K^\infty \left(\frac{\alpha_\bfn}{\alpha(1+O(1/X))}\right)^{-u} u^A\, \d u
\]
uniformly for $\bfs\in\D$. Recalling \eqref{CaseA} and \eqref{KV}, we see that for $X\geq X_0=X_0(\alpha)$, where $X_0$ will not necessarily be the same at each occurrence later on, this integral converges quickly and is bounded by
\[
\begin{split}
&\ll \left(\frac{\alpha_\bfn}{\alpha(1+O(1/X)}\right)^{-K/2}\int_K^\infty \left(\frac{\alpha_\bfn}{\alpha(1+O(1/X)}\right)^{-u/2} u^A\, du\\
&\ll \left(\frac{\alpha_\bfn}{\alpha(1+O(1/X)}\right)^{-K/2}\ll \prod_{\nu=1}^N n_\nu^{-\frac{1}{2}\kappa_\nu K}.
\end{split}
\]
Therefore, the part of the sum in \eqref{GX3I} satisfying \eqref{CaseA} is, uniformly for $\bfs\in\D$ and $X$ sufficiently large,
\[
\ll \sum_{n_1=1}^\infty \ldots \sum_{n_N=1}^\infty \prod_{\nu=1}^N 
|a_\nu(n_\nu)| n_\nu^{|s_\nu|-\frac{1}{2}\kappa_\nu K-1}\ll 1,
\]
where the second inequality holds since trivially $a_\nu(n_\nu)\ll n_\nu^2$ and, from  \eqref{KV}, we have 
\[|s_\nu|-\frac{1}{2}\kappa_\nu K\leq -3.\] 

\smallskip
%%%%%%
Suppose now that
\begin{equation}\label{CaseB}
\alpha_\bfn< \alpha.
\end{equation}
Then we use \eqref{Shs1-s} and change the path of integration in \eqref{J} to $(-K+iK,\infty+iK)$. Applying Lemma \ref{hasym} and Stirling's formula in a similar way as before, we obtain 
\[
\begin{split}
\J_X(\bfs,\bfn,\alpha)&\ll\int_{-K}^\infty 
\prod_{\nu=1}^N |\overline{h_{F_\nu}}(1-s_\nu-\kappa_\nu (u+iK))|^{-1}|\Gamma(u+iK)| |z_X(\alpha)|^{-u}
\prod_{\nu=1}^N n_{\nu}^{\kappa_\nu u}\, \d u\\
&\ll \int_{-K}^\infty  \left(\frac{\alpha_\bfn}{\alpha(1+O(1/X))}\right)^{u} (|u|+1)^A\, \d u\ll 1.
\end{split}
\]
Since the number of terms in \eqref{GX3I} with indices satisfying \eqref{CaseB} is finite, this suffices to conclude that their contribution is bounded with the same uniformity as before. Hence the contribution of the terms in \eqref{GX3I} with $\alpha_\bfn\neq\alpha$ is
\begin{equation}
\label{GXX3}
\ll 1
\end{equation}
uniformly for $\bfs\in\D$ and $X\geq X_0$, with an implicit constant depending on $\D, \bfF$ and $\alpha$.

\smallskip
%%%%%
When $\alpha\not\in {\rm Spec}(\bfF)$, all the non-vanishing terms in $\bfF^{(3)}_X(\bfs,\alpha)$ satisfy \eqref{CaseA} or \eqref{CaseB}. Hence recalling \eqref{GX2B}--\eqref{GX2} and \eqref{GXX3} we have
\[
\bfF_X(\bfs,\alpha)\ll 1
\]
with the same uniformity as before. Therefore, by Vitali convergence theorem, see Lemma \ref{Vitali}, we conclude that the limit in \eqref{limit}  exists for all $\bfs\in\D$ and is holomorphic in this region. This gives the analytic continuation of $\bfF( \bfs, \alpha)$ to $\D$, and since $\D$ can be taken arbitrarily large, the first part of Theorem \ref{T1} follows.

\smallskip
%%%%%
Suppose now that $\alpha\in {\rm Spec}(\bfF)$, so there is a non-empty set of $\bfn$'s such that $\alpha_\bfn=\alpha$. 
Observe that if $\bfn$ and $\bfn'$ are such that $\alpha_\bfn=\alpha_{\bfn'}$ then $\J_X(\bfs,\bfn,\alpha)=\J_X(\bfs,\bfn',\alpha)$. Thus, denoting by
\begin{equation}
\label{new-222}
\J_X(\bfs,\alpha) = \frac{1}{2\pi i} \int_{\LL_3} \left(\prod_{\nu=1}^N\S_{F_\nu}(s_\nu+\kappa_\nu w) h_{F_\nu}(s_\nu+\kappa_\nu w)\right)\Gamma(w) \left(z_X(\alpha) \alpha^{-1}\prod_{\nu=1}^N (q_\nu \kappa_\nu^{d_\nu})^{-\kappa_\nu}\right)^{-w}\, \d w
\end{equation}
the common function corresponding to the $\bfn$'s with $\alpha_\bfn=\alpha$, from \eqref{GX2B},\eqref{GX1},\eqref{GX2},\eqref{GX3I} and \eqref{GXX3} we have
\begin{equation}
\label{GXT2}
\bfF_X(\bfs,\alpha) =  \Xi(\bfs,\alpha) \J_X(\bfs,\alpha) + H_X^{(1)}(\bfs,\alpha),
\end{equation}
where $\Xi(\bfs,\alpha)$ is defined by \eqref{Xi} and $H_X^{(1)}(\bfs,\alpha)$ is holomorphic and uniformly bounded as before. Using $\sin(s)=(e^{is}-e^{-is})/(2i)$ and \eqref{omegaxi} we check that there exists a positive $\lambda_0$ such that
\begin{equation}
\label{new100}
\prod_{\nu=1}^N S_{F_\nu}(s_\nu+\kappa_\nu w) = e^{-iA(\bfs)- i\frac{\pi}{2}w}\left(1+ O(e^{-\lambda_0v})\right)
\end{equation}
for $w \in \LL_3$, where $v=\Im(w)$ and, with the notation in \eqref{omegaxi},
\begin{equation}
\label{Abfs}
A(\bfs)= \frac{\pi}{2} \sum_{\nu=1}^N(d_\nu s_\nu +\xi_{\nu}). 
\end{equation}
Inserting \eqref{new100} into \eqref{new-222} we obtain
\begin{equation}
\label{new3}
\J_X(\bfs, \alpha)=\I_X(\bfs,\alpha) + H_X^{(2)}(\bfs,\alpha),
\end{equation}
where
\begin{equation}
\label{J1}
\I_X(\bfs,\alpha)=e^{-iA(\bfs)}\frac{1}{2\pi i} \int_{\LL_3} \prod_{\nu=1}^N h_{F_\nu}(s_\nu+\kappa_\nu w)\Gamma(w)
\left(iz_X(\alpha) \alpha^{-1}\prod_{\nu=1}^N (q_\nu \kappa_\nu^{d_\nu})^{-\kappa_\nu}\right)^{-w}\, \d w.
\end{equation}
Here $H_X^{(2)}(\bfs,\alpha)$ is holomorphic and uniformly bounded as before, as it can easily be justified using the error term $O(e^{-\lambda_0v})$ in \eqref{new100}.

\smallskip
%%%%%
Recalling \eqref{new1}, let $M\geq0$ be an integer such that
\begin{equation}
\label{M}
 (\|\D\|+\frac{1}{2}) d -\frac{1}{2} < M \leq (\|\D\|+\frac{1}{2})d +\frac{1}{2}.
 \end{equation}
We write $s_{\ell,\nu}^*=\frac{d_\nu+1}{2d_\nu}-\frac{\ell}{d_\nu}-i\theta_\nu$ and apply Lemmas \ref{hasym} and \ref{prod} with
\[ 
\alpha_\nu= \frac{1}{\sqrt{2\pi}} \left(\frac{q_\nu^{1/d_\nu}}{2\pi d_\nu}\right)^{d_\nu(\frac12-s_\nu-\kappa_\nu w)} \sum_{\ell=0}^M d_{F_\nu}(\ell) \Gamma\big(d_\nu(s_{\ell,\nu}^*-s_\nu - \kappa_\nu w)\big),
\]
\[ 
r_\nu = h_{F_\nu}(s_\nu+\kappa_\nu w) - \alpha_\nu \quad \text{and}\quad  A_\nu= c(\D) |\Ga(d_\nu(s_{0,\nu}^*-s_\nu-\kappa_\nu w)|,
 \]
where $c(\D)>0$ is a sufficiently large constant. We obtain that
\begin{equation}
\label{prodh}
\prod_{\nu=1}^N h_{F_\nu}(s_\nu+\kappa_\nu w) = B_1(\bfs) E_1^{-w}\sum_{\ell_1=0}^M\ldots\sum_{\ell_N=0}^M
\prod_{\nu=1}^N d_{F_\nu}(\ell_\nu) \Gamma\big(d_\nu(s_{\ell_\nu,\nu}^*-s_\nu-\kappa_\nu w)\big) + R(\bfs,w),
\end{equation}
say, where
\begin{equation}
\label{B1bfs}
B_1(\bfs)=(2\pi)^{-N/2}\prod_{\nu=1}^N \left(\frac{q_\nu^{1/d_\nu}}{2\pi d_\nu}\right)^{d_\nu(\frac12-s_\nu)},
\qquad
E_1= \prod_{\nu=1}^N\left(\frac{q_\nu^{1/d_\nu}}{2\pi d_\nu}\right)^{d_\nu\kappa_\nu},
\end{equation}
\begin{equation}
\label{Restimate}
R(\bfs,w)\ll \max_{1\leq\nu\leq N} \Big(\frac{1}{|s_\nu+\kappa_\nu w|}
|\Ga\big(d_\nu(s_{M,\nu}^*-s_\nu- \kappa_\nu w)\big)|
\prod_{\substack{1\leq\mu\leq N\\ \mu\neq \nu}} |\Ga\big(d_\mu(s_{0,\nu}^*-s_\mu- \kappa_\mu w)\big)|\Big).
\end{equation}
Recalling \eqref{Dbig} and \eqref{KV}, for $w\in \LL_3$ we have 
\[
|\Im (d_\nu(s_{\ell_\nu,\nu}^*-s_\nu - \kappa_\nu w))|\geq d_\nu( \kappa_\nu V -  \|\D\| - |\theta_\nu| ) \geq 1
\]
and
\[ 
\Im(s_\nu+\kappa_\nu w)\geq \kappa_\nu V- \|\D\| \geq \|\D\| \geq |\theta_\nu|+1.
\]
Hence, for $w\in \LL_3$, the remainder $R(\bfs,w)$ in \eqref{prodh} is holomorphic as a function of $\bfs\in \D$.
Moreover, using \eqref{Restimate} and \eqref{St2}, for such $w$ we have
\[
 R(\bfs,w) \Gamma(w) \ll e^{-\pi v} |w|^A,
\]
where, using  \eqref{M},
\[ 
A\leq (\|\D\|+\frac12)d -M-\frac{3}{2}<-1.
\]
Inserting \eqref{prodh} into \eqref{J1} we therefore obtain that
\begin{equation}
\label{J1B}
\begin{split}
&\I_X(\bfs,\alpha)= e^{-iA(\bfs)}B_1(\bfs)\sum_{\ell_1=0}^M\ldots\sum_{\ell_N=0}^M
\prod_{\nu=1}^N d_{F_\nu}(\ell_\nu)  \frac{1}{2\pi i} \int_{\LL_3} \prod_{\nu=1}^N\Gamma(d_\nu(s_{\ell_\nu,\nu}^*-s_\nu)-d_\nu\kappa_\nu w) \\
&\hskip1cm \times \Gamma(w) \left(iz_X(\alpha)\alpha^{-1}\prod_{\nu=1}^N (2\pi d_\nu\kappa_\nu)^{-d_\nu\kappa_\nu}\right)^{-w}\, \d w+ H_X^{(3)}(\bfs,\alpha),
\end{split}
\end{equation}
where $H_X^{(3)}(\bfs,\alpha)$ is holomorphic and uniformly bounded as before.

\smallskip
%%%%%
Recalling \eqref{new1} and writing $\bfell=\sum_{\nu=1}^N \ell_\nu$ we have
\[
\sum_{\nu=1}^Nd_\nu(s_{\ell_\nu,\nu}^*-s_\nu) -\frac{N-1}{2} = \frac{d+1}{2}-\bfell-i\theta -\sum_{\nu=1}^N d_\nu s_\nu.
\]
Hence, applying Lemma \ref{prodgamma} we deduce that
\begin{equation}
\label{new2}
\begin{split}
&\prod_{\nu=1}^N\Gamma\big(d_\nu(s_{\ell_\nu,\nu}^*-s_\nu)-d_\nu\kappa_\nu w\big) \\ 
&\hspace{1cm}=B_2(\bfs) E_2^{-w}
\sum_{m=0}^M P_{\ell_1,\ldots,\ell_N,m}(\bfs) \Ga\Big(\frac{d+1}{2}-\bfell-m-i\theta - \sum_{\nu=1}^N d_\nu s_\nu -w\Big)\\
&\hspace{1cm}\quad  +O\Big(\frac{1}{|w|}\Big| \Ga\Big(\frac{d+1}{2}-\bfell-M-i\theta - \sum_{\nu=1}^N d_\nu s_\nu -w\Big)\Big|\Big),
\end{split}
\end{equation}
where
\begin{equation}
\label{B2bfs}
B_2(\bfs) = (2\pi)^{\frac{N-1}{2}}\prod_{\nu=1}^N (d_\nu \kappa_\nu)^{d_\nu(\frac{1}{2}- s_\nu-i\theta_\nu)-\ell_\nu}, \qquad 
E_2= \prod_{\nu=1}^N(d_\nu\kappa_\nu)^{d_\nu\kappa_\nu}
\end{equation}
and $P_{\ell_1,\ldots,\ell_N,m}\in \CC[\bfs]$ are polynomials defined by
\begin{equation}
\label{Pl1lN}
P_{\ell_1,\ldots,\ell_N,m}(\bfs) = P_m(a_1,\ldots,a_N),
\end{equation}
where the $P_m$'s are as in \eqref{prodGamma}--\eqref{Pm} with parameters $\lambda_\nu=d_\nu\kappa_\nu$ and 
\begin{equation}
\label{aell}
a_\nu= d_\nu(s_{\ell_\nu,\nu}^*-s_\nu).
\end{equation}
Next we insert \eqref{new2} into \eqref{J1B} and observe that, thanks to \eqref{sumdkappa} and \eqref{new101}, similarly as before we obtain
\begin{equation}
\label{J1C}
\begin{split}
&\I_X(\bfs,\alpha)= e^{-iA(\bfs)}B_1(\bfs)B_2(\bfs)\sum_{\ell_1=0}^M\ldots\sum_{\ell_N=0}^M\sum_{m=0}^M
\prod_{\nu=1}^N \big(d_{F_\nu}(\ell_\nu)(d_\nu\kappa_\nu)^{-\ell_\nu}\big)P_{\ell_1,\ldots,\ell_N,m}(\bfs)\\
&\quad\times  \frac{1}{2\pi i} \int_{\LL_3}\Gamma\Big(\frac{d+1}{2}-\bfell-m-i\theta -\sum_{\nu=1}^Nd_\nu s_\nu -w\Big)\Gamma(w)
(i\omega_X)^{-w}\, \d w
+ H_X^{(4)}(\bfs,\alpha)
\end{split}
\end{equation}
with a certain holomorphic function $H_X^{(4)}(\bfs,\alpha)$, uniformly bounded as before.

\smallskip
%%%%%
Let $\ell\geq-1$ be an integer. If
\begin{equation}\label{SellA}
\sum_{\nu=1}^Nd_\nu\Re(s_\nu) \geq \frac{d+1}{2} -\ell -\frac{2}{3},
\end{equation}
$\bfell+m\geq \ell+1$ and $w\in\LL_3$ we have
\[
\Gamma\Big(\frac{d+1}{2}-\bfell-m-i\theta -\sum_{\nu=1}^Nd_\nu s_\nu -w\Big)\Gamma(w)
\ll e^{-\pi v}|w|^{-\bfell-m+\ell-\frac13}\ll e^{-\pi v} v^{-4/3}.
\]
Therefore, the corresponding integrals in \eqref{J1C} are bounded, and hence we may restrict the range of summation in \eqref{J1C} to $\bfell+m\leq \ell$, thus generating a negligible error term. Recalling \eqref{GXT2},\eqref{new3} and \eqref{J1C}, for any given $\ell\geq-1$ and $\bfs\in \D$ satisfying \eqref{SellA} we can write
\begin{equation}
\label{GX3}
\begin{split}
\bfF_X(\bfs,\alpha) = &\sum_{k=0}^\ell \Xi_k(\bfs,\alpha) \frac{1}{2\pi i} \int_{\LL_3}\Gamma\Big(\frac{d+1}{2}-k-i\theta -\sum_{\nu=1}^Nd_\nu s_\nu -w\Big)\Gamma(w)
(i\omega_X)^{-w}\, \d w\\
&\hspace{8.5cm} + H_{X,\ell}^{(5)}(\bfs,\alpha);
\end{split}
\end{equation}
here $H_{X,\ell}^{(5)}(\bfs,\alpha)$ is holomorphic and bounded uniformly for $\bfs\in \D$ satisfying \eqref{SellA} and $X\geq X_0$, and
\begin{equation}
\label{Xik}
\Xi_k(\bfs,\alpha)=e^{-iA(\bfs)}B_1(\bfs)B_2(\bfs)\Xi(\bfs,\alpha)\sum_{\ell_1=0}^M\dots\sum_{\ell_N=0}^M\sum_{\substack{m=0\\ \\\hspace{-1.6cm} \bfell+m=k}}^M
\prod_{\nu=1}^N \big(d_{F_\nu}(\ell_\nu)(d_\nu\kappa_\nu)^{-\ell_\nu}\big)P_{\ell_1,\dots,\ell_N,m}(\bfs)
\end{equation}
with $\Xi(\bfs,\alpha)$ defined in \eqref{Xi}. Note that for $\ell=-1$ the sum in \eqref{GX3} is empty, and hence such a term vanishes. For $\ell\geq 0$ we set
\[ 
\D_\ell=\Big\{ \bfs\in\D: \frac{d+1}{2} -\ell-\frac23 \leq \sum_{\nu=1}^N d_\nu \Re(s_\nu) <\frac{d+1}{2} -\ell+ \frac23\Big\},
\]
and for $\ell=-1$ let
\[ 
\D_{-1}=\Big\{ \bfs\in\D:  \sum_{\nu=1}^N d_\nu \Re(s_\nu) \geq \frac{d+1}{2} + \frac13\Big\}.
\]
If $\D_\ell \neq \emptyset$ then 
\[ 
\ell< \frac{d+1}{2} -\sum_{\nu=1}^N d_\nu \Re(s_\nu) +\frac23\leq d \big(\frac12 +\|\D\|\big) + \frac76.
\] 
Hence, recalling \eqref{KV}, for $k\leq \ell$, $\Re(w)=-K$ and $\bfs\in \D_\ell$ we have
\[
\Re\Big(\frac{d+1}{2}-k-i\theta -\sum_{\nu=1}^Nd_\nu s_\nu -w\Big) \geq  \frac{d+1}{2}-\ell -\sum_{\nu=1}^Nd_\nu \Re(s_\nu) +K \geq K- 2d\|\D\|-\frac23>7.
\]
Therefore, the integrand in \eqref{GX3} is holomorphic for $\bfs\in\D_\ell$ for every $\Re(w)=-K$. Moreover, for $\Im(w)=v\leq V$ we have 
\[
|(i\omega_X)^{-w}| \ll_V 1
\]
uniformly for $X\geq X_0$. Hence, replacing the path of integration in \eqref{GX3} by the whole vertical line $\Re(w)=-K$ generates an error of size $O(1)$ uniformly for $\bfs\in\D_\ell$ and $X\geq X_0$. Thus, we obtain
\begin{equation}
\label{GX4}
\bfF_X(\bfs,\alpha) = \sum_{k=0}^\ell \Xi_k(\bfs,\alpha) \I_{X,k}(\bfs,\alpha)+ H_{X,\ell}^{(6)}(\bfs,\alpha),
\end{equation}
where
\begin{equation}
\label{Ik}
\I_{X.k}(\bfs,\alpha)=\frac{1}{2\pi i} \int_{(-K)}\Gamma\Big(\frac{d+1}{2}-k-i\theta -\sum_{\nu=1}^Nd_\nu s_\nu -w\Big)\Gamma(w)
(i\omega_X)^{-w}\, \d w 
\end{equation}
and $H_{X,\ell}^{(6)}(\bfs,\alpha)$ is holomorphic and uniformly bounded for $\bfs\in\D_\ell$ and $X\geq X_0$. 

\smallskip
%%%%%
Now we compute $\I_{X,k}(\bfs,\alpha)$ using Lemma \ref{Mellin}. For $k\leq \ell -1$ and $\bfs\in\D_\ell$ we have 
\[
\Re\left(\frac{d+1}{2}-k-i\theta -\sum_{\nu=1}^Nd_\nu s_\nu\right) \geq 
\frac{d+1}{2}-\ell +1 -\sum_{\nu=1}^Nd_\nu\Re(s_\nu)\geq \frac13.
\]
Thus, shifting the line of integration in \eqref{Ik} to $\Re(w)=1/6$ and then using Lemma \ref{Mellin} with
\[
\xi=\frac{d+1}{2}-k-i\theta -\sum_{\nu=1}^Nd_\nu s_\nu, \quad c=\frac16 \quad \text{and}\quad
\eta=i\omega_X,
\]
we obtain
\begin{equation}
\label{IkA}
\begin{split}
\I_{X,k}(\bfs,\alpha) &= 
\Gamma\Big( \frac{d+1}{2}-k-i\theta -\sum_{\nu=1}^Nd_\nu s_\nu\Big)
\big((e^{-i\frac{\pi}{2}} X)^{\frac{d+1}{2}-k -i\theta-\sum_{\nu=1}^Nd_\nu s_\nu} - 1\big)\\
&\hspace{2cm} - \sum_{\mu=1}^{[K]} \frac{(-1)^\mu}{\mu!} \Gamma\left( \frac{d+1}{2}-k-i\theta -\sum_{\nu=1}^N d_\nu s_\nu + \mu\right)(i\omega_X)^\mu\\
& \hskip-1cm =\Gamma\Big( \frac{d+1}{2}-k-i\theta -\sum_{\nu=1}^Nd_\nu s_\nu\Big)
\big((e^{-i\frac{\pi}{2}} X)^{\frac{d+1}{2}-k -i\theta-\sum_{\nu=1}^Nd_\nu s_\nu} - 1\big) + H_{X,k}^{(7)}(\bfs,\alpha),
\end{split}
\end{equation}
where $H_{X,k}^{(7)}(\bfs,\alpha)$ is holomorphic and uniformly bounded for $\bfs\in \D_\ell$ and $X\geq X_0$. Let now $k=\ell$, write
\[
\D_\ell^-=\Big\{ \bfs\in\D_\ell: \sum_{\nu=1}^Nd_\nu\Re(s_\nu) <\frac{d+1}{2} -\ell\Big\}
\]
and observe that for $\bfs\in\D_\ell^-$ we have
\[
\Re\left(\frac{d+1}{2}-\ell-i\theta -\sum_{\nu=1}^Nd_\nu s_\nu\right) \geq  \frac{d+1}{2}-\ell  -\sum_{\nu=1}^Nd_\nu\Re(s_\nu)>0.
\]
Thus, shifting the line of integration in \eqref{Ik} to the right and then using Lemma \ref{Mellin} similarly as before with
\[
\xi=\frac{d+1}{2}-\ell-i\theta -\sum_{\nu=1}^Nd_\nu s_\nu , \quad  c=\frac12\Big(\frac{d+1}{2}-\ell -\sum_{\nu=1}^Nd_\nu\Re(s_\nu)\Big) , \quad \eta=i\omega_X,
\]
we conclude that, for $\bfs\in \D_\ell^-$,
\begin{equation}
\label{Iell}
\begin{split}
\I_{X,\ell}(\bfs,\alpha) &= 
\Gamma\Big( \frac{d+1}{2}-\ell-i\theta -\sum_{\nu=1}^Nd_\nu s_\nu\Big)
\big((e^{-i\frac{\pi}{2}} X)^{\frac{d+1}{2}-\ell -i\theta-\sum_{\nu=1}^Nd_\nu s_\nu} - 1\big)\\
&\hspace{2cm} - \sum_{\mu=1}^{[K]} \frac{(-1)^\mu}{\mu!} \Gamma\left( \frac{d+1}{2}-\ell-i\theta -\sum_{\nu=1}^N d_\nu s_\nu + \mu\right)(i\omega_X)^\mu\\
& \hskip-1cm =\Gamma\Big( \frac{d+1}{2}-\ell-i\theta -\sum_{\nu=1}^Nd_\nu s_\nu\Big)
\big((e^{-i\frac{\pi}{2}} X)^{\frac{d+1}{2}-\ell -i\theta-\sum_{\nu=1}^Nd_\nu s_\nu} - 1\big) + H_{X,\ell}^{(8)}(\bfs,\alpha),
\end{split}
\end{equation}
say, where $H_{X,\ell}^{(8)}(\bfs,\alpha)$ is clearly holomorphic and uniformly bounded for $\bfs\in \D_\ell$ and $X\geq X_0$. Inserting \eqref{IkA} and \eqref{Iell} into \eqref{GX4} we obtain, for $\bfs\in\D_\ell^-$, that
\begin{equation}
\label{GX5}
\begin{split}
\bfF_X(\bfs,\alpha) &= 
\sum_{k=0}^\ell \Xi_k(\bfs,\alpha) \Gamma\Big( \frac{d+1}{2}-k-i\theta -\sum_{\nu=1}^Nd_\nu s_\nu\Big)
\big((e^{-i\frac{\pi}{2}} X)^{\frac{d+1}{2}-k -i\theta-\sum_{\nu=1}^Nd_\nu s_\nu} - 1\big)\\ 
& \hspace{10cm} + R_{X,\ell}(\bfs,\alpha),
\end{split}
\end{equation}
where $R_{ X,\ell}(\bfs,\alpha)$ is holomorphic and uniformly bounded for $\bfs\in\D_\ell^-$ and $X\geq X_0$.
By analytic continuation this formula holds for all $\bfs\in\D_\ell$, with $R_{ X,\ell}(\bfs,\alpha)$ holomorphic and uniformly bounded on $\D_\ell$ since the $\Xi_k(\bfs,\alpha)$ in \eqref{Xik} are holomorphic on $\D$, and bounded since $\D$ is bounded.

\smallskip
%%%%%
We now prove by a finite induction on $-1\leq \ell\leq L:=\max\{\ell: \D_\ell\neq\emptyset\}$ that the following statements hold true.

(A) For $\bfs\in \D_\ell$ the limit
\begin{equation}
\label{limitR}
 R_{\ell}(\bfs,\alpha) = \lim_{X\to\infty} R_{X,\ell}(\bfs,\alpha)
\end{equation}
exists and is holomorphic on $\D_\ell$.

(B) The standard twist $\bfF(\bfs,\alpha)$ admits analytic continuation to $\D_\ell\backslash H^*_\ell$, where the hyperplanes $H^*_\ell$ with $\ell\geq 0$ are defined in \eqref{H}, whereas $H^*_{-1}=\emptyset$ since for $\ell=-1$ the sum in \eqref{GX3} is empty.

(C) For $\bfs\in\D_\ell$, we have 
\[
R_{\ell}(\bfs,\alpha)= \bfF(\bfs,\alpha) + \sum_{k=0}^\ell\Xi_k(\bfs,\alpha)\Gamma\Big( \frac{d+1}{2}-k-i\theta -\sum_{\nu=1}^Nd_\nu s_\nu\Big).
\]

\noindent
For $\ell=-1$, \eqref{GX5} tells us that for $\bfs\in \D_{-1}$ we have
\[ 
\bfF_X(\bfs,\alpha)=R_{X,-1}(\bfs,\alpha)=O(1)
\]
uniformly for $X\geq X_0$. Recalling \eqref{limit}, we see that the limit in \eqref{limitR} exists for $\bfs\in \D_{-1}\cap \CC_1^N$, and hence by the Vitali convergence theorem, see Lemma 7, such a limit exists for all $\bfs\in\D_{-1}$ and is holomorphic in this domain. This shows (A), and statements (B) and (C) follow as well since $H^*_\ell=\emptyset$ and $\Xi_\ell(\bfs,\alpha)\equiv 0$ for $\ell=-1$. Let now $0\leq\ell\leq L$ and suppose that (A)-(C) hold up to $\ell-1$. According to \eqref{GX5}, for $\bfs\in \D_\ell \cap\D_{\ell-1}$ we have 
\[
\begin{split}
R_{X,\ell}(\bfs,\alpha)&=R_{X,\ell-1}(\bfs,\alpha) \\
&- \Xi_\ell(\bfs,\alpha)\Gamma\Big( \frac{d+1}{2}-\ell-i\theta -\sum_{\nu=1}^Nd_\nu s_\nu\Big)
\big((e^{-i\frac{\pi}{2}} X)^{\frac{d+1}{2}-\ell -i\theta-\sum_{\nu=1}^Nd_\nu s_\nu} - 1\big).
\end{split}
\]
For such $\bfs$, the exponent in the last factor on the right-hand side has negative real part. Thus, using in addition the inductive assumption, we infer that for $\bfs\in \D_\ell \cap\D_{\ell-1}$ the limit $R_\ell(\bfs,\alpha)$ in \eqref{limitR} exists and equals
\begin{equation}\label{R+Xi}
\begin{split}
&R_{\ell-1}(\bfs,\alpha) + \Xi_\ell(\bfs,\alpha)\Gamma\Big( \frac{d+1}{2}-\ell-i\theta -\sum_{\nu=1}^Nd_\nu s_\nu\Big)\\
&\hspace{1cm}=\bfF(\bfs,\alpha) + \sum_{k=0}^\ell\Xi_k(\bfs,\alpha)\Gamma\Big( \frac{d+1}{2}-k-i\theta -\sum_{\nu=1}^Nd_\nu s_\nu\Big).
\end{split}
\end{equation}
By Vitali's theorem, the limit exists for all $\bfs\in\D_\ell$, thus $R_{\ell}(\bfs,\alpha)$ is holomorphic in $\D_\ell$. Therefore, \eqref{R+Xi} gives analytic continuation of $\bfF(\bfs,\alpha)$ to $\D_\ell\backslash H^*_\ell$. Hence conditions (B) and (C) follow, and the inductive argument is complete.

\smallskip
%%%%%
We can finally conclude the proof of Theorem \ref{T1}. Since $\D=\bigcup_{\ell=-1}^{L}\D_\ell$, from (B) we deduce that 
$\bfF(\bfs,\alpha)$  admits analytic continuation to $\D\backslash \bigcup_{\ell=0}^\infty H^*_\ell$ and, moreover, \eqref{limitXiell} holds. Theorem \ref{T1} now follows since $\D$ can be taken arbitrarily large.

\medskip
%%%%%
\section{Proof of Theorem \ref{T2}} 
%%%%%%%%%%%%%%%%%%%%%%%%%%%%%%%%%%%%%%%%%%%%%%%%%%%%%%%%%%%%
A computation based on \eqref{Abfs},\eqref{B1bfs} and  \eqref{B2bfs} shows that
\[
e^{-iA(\bfs)}B_1(\bfs)B_2(\bfs)=  \frac{1}{\sqrt{2\pi}}e^{- i \sum_{\nu=1}^{N} \big(\frac{\pi}{2} d_\nu\, s_\nu + \frac{\pi}{2}\,\xi_\nu + d_\nu\,\theta_\nu\,\log(d_\nu\,\kappa_\nu)\big)} \prod_{\nu=1}^N \Big(\frac{\kappa_\nu q_\nu^{1/d_\nu}}{2\pi}\Big)^{d_\nu\big(\frac{1}{2}-s_\nu\big)}.
\]
Hence \eqref{Xik} implies \eqref{Xiell} with
\begin{equation}
\label{new111}
W_\ell(\bfs)= \sum_{\ell_1\geq0}\dots\sum_{\ell_N\geq0}\sum_{\substack{m\geq0\\ \\\hspace{-1.6cm} \bfell+m=\ell}}
\prod_{\nu=1}^N \big(d_{F_\nu}(\ell_\nu)(d_\nu\kappa_\nu)^{-\ell_\nu}\big)P_{\ell_1,\ldots,\ell_N,m}(\bfs),
\end{equation}
where the polynomials $P_{\ell_1,\ldots,\ell_N,m}(\bfs)$ are defined in \eqref{Pl1lN} and \eqref{aell}. Note that the upper limit $M$ in the summations, present in \eqref{Xik}, is not present in \eqref{new111}. Indeed, $\D$ is a large
domain and $\ell$ is such that $\D\cap H^*_\ell\neq \emptyset$. For $\bfs\in\D\cap H^*_\ell$ we have $\ell\leq d(\|\D\|+\half)+\half$ and hence, recalling \eqref{M}, the condition $\bfell+m=\ell$ implies that $\ell_1,\dots,\ell_N,m\leq M$. Hence, the upper limit $M$ in the summations in \eqref{Xik} can be dropped.

\smallskip
%%%%% 
Next we compute the degree of $W_\ell(\bfs)$. For simplicity we denote by $\widetilde{\Omega}_n$ the subspace of the polynomials in $\CC[\bfs]$ of degree $\leq n$. If $(\ell_1,\ldots,\ell_N)\neq (0,\ldots,0)$  then, according to Lemma \ref{prodgamma}, we have
\[
P_{\ell_1,\ldots,\ell_N,m}(\bfs)\equiv 0 \, (\bmod\,\tilde{\Omega}_{2(\ell-1)}).
\]
Thus, recalling that $|d_{F_\nu}(0)|=1$, see (1.22) in \cite{Ka-Pe/2021a}, and using \eqref{Pl1lN},\eqref{aell} and \eqref{Pm} we deduce that
\begin{equation}
\label{Well2}
W_\ell(\bfs)\equiv P_{0,\ldots,0,\ell}(\bfs) \, (\bmod\,\widetilde{\Omega}_{2(\ell-1)}) \equiv \frac{1}{2^\ell \ell!}\Big(\sum_{\nu=1}^N \frac{d_\nu }{\kappa_\nu}s_\nu^2 -\Big(\sum_{\nu=1}^N d_\nu s_\nu\Big)^2\Big)^\ell \, (\bmod\,\tilde{\Omega}_{2\ell-1}).
\end{equation}
This shows that $\deg W_\ell=2\ell$ since $N\geq 2$.

\smallskip
%%%%%
To conclude the proof we have to check that for every $\ell$ the ``residual function'' $\Xi_\ell(\bfs,\alpha)$ does not vanish identically on the hyperplane $H^*_\ell$. For $\bfs=(s_1,\ldots,s_N)\in H^*_\ell$ we have 
\begin{equation}
\label{sN}
 s_N=-\frac{1}{d_N} \sum_{\nu=1}^{N-1}d_\nu s_\nu+\frac{d+1}{2d_N}-\frac{\ell}{d_N} -i\frac{\theta}{d_N},
 \end{equation}
and hence, according to \eqref{Well2}, we obtain
\[
W_\ell(\bfs)\equiv  \frac{1}{2^\ell \ell!}\Big(\sum_{\nu=1}^{N-1} \frac{d_\nu }{\kappa_\nu}s_\nu^2
+\Big(\sum_{\nu=1}^{N-1}\frac{d_\nu}{d_N} s_\nu\Big)^2\Big)^\ell
\, (\bmod\,\tilde{\Omega}_{2\ell-1}\cap\CC[s_1,\ldots,s_{N-1}]).
\]
Therefore, the restriction $W_\ell(\bfs)_{|H^*_\ell}$ is a polynomial in $\CC[s_1,\ldots,s_{N-1}]$ of degree $2\ell$. In particular, it cannot vanish identically on $H^*_\ell$. Turning to  $\Xi(\bfs,\alpha)$, for $\bfs\in H^*_\ell$ and using \eqref{sN} we rewrite  \eqref{Xi} as
\[
\Xi(\bfs, \alpha) = \omega_\bfF \sum_{\substack{\bfm\in\NN^N\\ \alpha_\bfm=\alpha}} A(\bfm) \exp(L_{\bfm}(\bfs)),
\]
where
\[
A(\bfm)=m_N^{\frac{1}{d_N}\big(\frac{d+1}{2}-\ell-i\theta\big)}\prod_{\nu=1}^N\frac{\overline{a_\nu(m_\nu)}}{m_\nu}
\]
and
\[
L_\bfm(\bfs)= \sum_{\nu=1}^{N-1}\Big(\log m_\nu-\frac{d_\nu}{d_N}\log m_N\Big) s_\nu.
\]
If $\Xi_\ell(\bfs,\alpha)\equiv 0$ on $H^*_\ell$ then, according to Lemma \ref{linindep}, not all linear forms $L_\bfm(\bfs)$ are different. Let $\bfm\neq \bfm'$ be such that $\alpha_\bfm = \alpha_{\bfm'}=\alpha$ and $L_\bfm(\bfs) = L_{\bfm'}(\bfs)$ for all $\bfs\in \CC^{N-1}$. Then we have
\begin{equation}
\label{KmI}
 \sum_{\nu=1}^N \kappa_\nu \log m_\nu =  \sum_{\nu=1}^N \kappa_\nu \log m_\nu',
 \end{equation}
and for $1\leq \nu\leq N-1$
\begin{equation}
\label{KmII}
\log m_\nu-\frac{d_\nu}{d_N}\log m_N = \log m_\nu'-\frac{d_\nu}{d_N}\log m_N'.
\end{equation}
Recalling \eqref{sumdkappa} we have
\[
\begin{split}
\sum_{\nu=1}^N \kappa_\nu \log m_\nu &= \kappa_N\log m_N +\sum_{\nu=1}^{N-1} \kappa_\nu \log m_\nu
=  \kappa_N\log m_N +\sum_{\nu=1}^{N-1}\kappa_\nu \Big(\log m_\nu' +\frac{d_\nu}{d_N}\log \frac{m_N}{m_N'}\Big)\\
&=\sum_{\nu=1}^N \kappa_\nu \log m_\nu' + \frac{1}{d_N}\log\frac{m_N}{m_N'}.
\end{split}
\]
This, together with \eqref{KmI}, gives $m_N=m_N'$. But then \eqref{KmII} implies $m_\nu=m_\nu'$ for $1\leq\nu\leq N-1$. Consequently, $\bfm=\bfm'$, contrary to our assumption. So we proved that all factors on the right-hand side of \eqref{Xiell} do not vanish identically on $H^*_\ell$, and Theorem \ref{T2} follows.

\bigskip
\noindent
Jerzy Kaczorowski, Faculty of Mathematics and Computer Science, A.Mickiewicz University, 61-614 Pozna\'n, Poland. e-mail: \url{kjerzy@amu.edu.pl}

\medskip
\noindent
Alberto Perelli, Dipartimento di Matematica, Universit\`a di Genova, via Dodecaneso 35, 16146 Genova, Italy. e-mail: \url{alberto.perelli@unige.it}

\end{document}